\newcommand\A{\mathcal{A}}
\newcommand\NN{\mathbb{N}}
\newcommand\RR{\mathbb{R}}
\newcommand\ZZ{\mathbb{Z}}
\newcommand\DD{\mathcal{D}}
\newcommand\FF{\mathcal{F}}
\newcommand\UU{\mathcal{U}}
\newcommand\eps{{\varepsilon}}
\renewcommand{\mod}{\operatorname{mod}}
\DeclareMathOperator\graph{Graph}
\DeclareMathOperator\Dim{Dim}
\newtheorem{theorem}{Theorem}[section]
\newtheorem{corollary}[theorem]{Corollary}
\newtheorem{lemma}[theorem]{Lemma}
\newtheorem{proposition}[theorem]{Proposition}
\newcommand{\address}{Address: Department of Mathematics, University of North Texas, 1155 Union Circle \#311430, Denton, TX 76203-5017, USA; E-mail: allaart@unt.edu}
\title{Differentiability of a two-parameter family of self-affine functions}
\author{Pieter C. Allaart \footnote{\address}}
\begin{document}

\maketitle

\begin{abstract}

This paper highlights an unexpected connection between expansions of real numbers to noninteger bases (so-called {\em $\beta$-expansions}) and the infinite derivatives of a class of self-affine functions. Precisely, we extend Okamoto's function (itself a generalization of the well-known functions of Perkins and Katsuura) to a two-parameter family $\{F_{N,a}: N\in\NN, a\in(0,1)\}$. We first show that for each $x$, $F_{N,a}'(x)$ is either $0$, $\pm\infty$, or undefined. We then extend Okamoto's theorem by proving that for each $N$, depending on the value of $a$ relative to a pair of thresholds, the set $\{x: F_{N,a}'(x)=0\}$ is either empty, uncountable but Lebesgue null, or of full Lebesgue measure. We compute its Hausdorff dimension in the second case. 


The second result is a characterization of the set $\DD_\infty(a):=\{x:F_{N,a}'(x)=\pm\infty\}$, which enables us to closely relate this set to the set of points which have a unique expansion in the (typically noninteger) base $\beta=1/a$. Recent advances in the theory of $\beta$-expansions are then used to determine the cardinality and Hausdorff dimension of $\DD_\infty(a)$, which depends qualitatively on the value of $a$ relative to a second pair of thresholds.


\bigskip
{\it AMS 2010 subject classification}: 26A27 (primary); 28A78, 11A63 (secondary)

\bigskip
{\it Key words and phrases}: Continuous nowhere differentiable function; infinite derivative; beta-expansion; Hausdorff dimension; Komornik-Loreti constant; Thue-Morse sequence.

\end{abstract}

\renewcommand{\bottomfraction}{0.4}

\section{Introduction}

The aim of this paper is to investigate the differentiability of a two-parameter family of self-affine functions, constructed as follows.
Fix a positive integer $N$ and a real parameter $a$ satisfying $1/(N+1)<a<1$, and let $b$ be the number such that $(N+1)a-Nb=1$. Note that $0<b<a$. Let $x_i:=i/(2N+1)$, $i=0,1,\dots,2N+1$, and for $j=0,1,\dots,N$, put
$y_{2j}:=j(a-b)$, and $y_{2j+1}:=(j+1)a-jb$.
Now set $f_0(x):=x$, and for $n=1,2,\dots$, define $f_n$ recursively on each interval $[x_i,x_{i+1}]$ ($i=0,1,\dots,2N$) by
\begin{equation}
f_n(x):=y_i+(y_{i+1}-y_i)f_{n-1}\big((2N+1)(x-x_i)\big), \qquad x_i\leq x\leq x_{i+1}.
\end{equation}
Each $f_n$ is a continuous, piecewise linear function from the interval $[0,1]$ onto itself, and it is easy to see that the sequence $(f_n)$ converges uniformly to a limit function which we denote by $F_{N,a}$. This function $F_{N,a}$ is again continuous and maps $[0,1]$ onto itself. It may be viewed as the self-affine function ``generated" by the piecewise linear function with interpolation points $(x_i,y_i)$, $i=0,1,\dots,2N+1$. When $N=1$ we have Okamoto's family of self-affine functions \cite{Okamoto}, which includes Perkins' function \cite{Perkins} for $a=5/6$ and the Katsuura function \cite{Katsuura} for $a=2/3$; see also Bourbaki \cite{Bourbaki}. Figure \ref{fig:construction} illustrates the above construction for $N=1$; graphs of $F_{1,a}$ for two values of $a$ are shown in Figure \ref{fig:Okamoto-graphs}; and Figure \ref{fig:5-part-function} illustrates the case $N=2$.

\begin{figure}[b] 
\begin{center}
\begin{picture}(360,150)(0,15)
\put(30,20){\line(1,0){126}}
\put(30,20){\line(0,1){126}}
\put(30,146){\line(1,0){126}}
\put(156,20){\line(0,1){126}}
\put(27,15){\makebox(0,0)[tl]{$0$}}
\put(72,18){\line(0,1){4}}
\put(62,15){\makebox(0,0)[tl]{$1/3$}}
\put(114,18){\line(0,1){4}}
\put(104,15){\makebox(0,0)[tl]{$2/3$}}
\put(154,15){\makebox(0,0)[tl]{$1$}}
\put(17,25){\makebox(0,0)[tl]{$0$}}
\put(28,104){\line(1,0){4}}
\put(16,107){\makebox(0,0)[tl]{$a$}}
\put(28,62){\line(1,0){4}}
\put(-2,67){\makebox(0,0)[tl]{$1-a$}}
\put(19,150){\makebox(0,0)[tl]{$1$}}
\put(30,20){\line(1,2){42}}
\put(72,104){\line(1,-1){42}}
\put(114,62){\line(1,2){42}}
\put(60,122){\makebox(0,0)[tl]{$f_1$}}
\thicklines
\dottedline{4}(30,20)(156,146)
\thinlines
\put(230,20){\line(1,0){126}}
\put(230,20){\line(0,1){126}}
\put(230,146){\line(1,0){126}}
\put(356,20){\line(0,1){126}}
\put(227,15){\makebox(0,0)[tl]{$0$}}
\put(272,18){\line(0,1){4}}
\put(262,15){\makebox(0,0)[tl]{$1/3$}}
\put(314,18){\line(0,1){4}}
\put(304,15){\makebox(0,0)[tl]{$2/3$}}
\put(354,15){\makebox(0,0)[tl]{$1$}}
\put(217,25){\makebox(0,0)[tl]{$0$}}
\put(228,104){\line(1,0){4}}
\put(216,107){\makebox(0,0)[tl]{$a$}}
\put(228,62){\line(1,0){4}}
\put(198,67){\makebox(0,0)[tl]{$1-a$}}
\put(219,150){\makebox(0,0)[tl]{$1$}}
\put(230,20){\line(1,4){14}}
\put(244,76){\line(1,-2){14}}
\put(258,48){\line(1,4){14}}
\put(272,104){\line(1,-2){14}}
\put(286,76){\line(1,1){14}}
\put(300,90){\line(1,-2){14}}
\put(314,62){\line(1,4){14}}
\put(328,118){\line(1,-2){14}}
\put(342,90){\line(1,4){14}}
\put(260,122){\makebox(0,0)[tl]{$f_2$}}
\thicklines
\dottedline{4}(230,20)(272,104)
\dottedline{4}(272,104)(314,62)
\dottedline{4}(314,62)(356,146)
\end{picture}
\end{center}
\caption{The first two steps in the construction of $F_{1,a}$}
\label{fig:construction}
\end{figure}
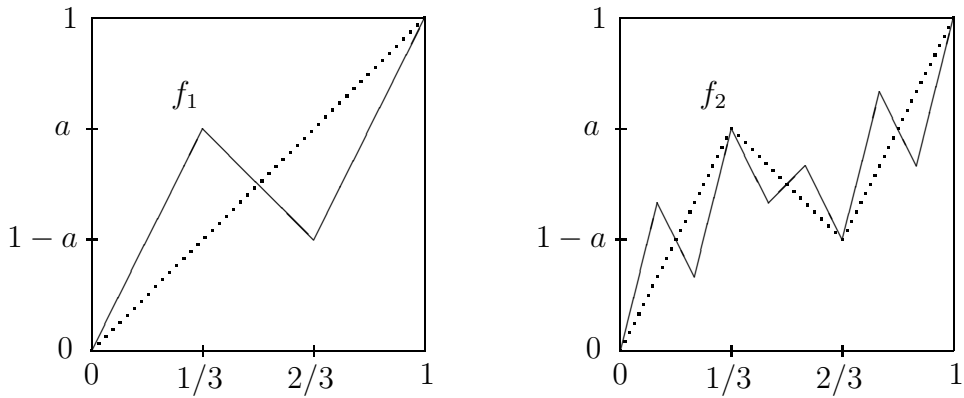

\begin{figure}
\begin{center}
\epsfig{file=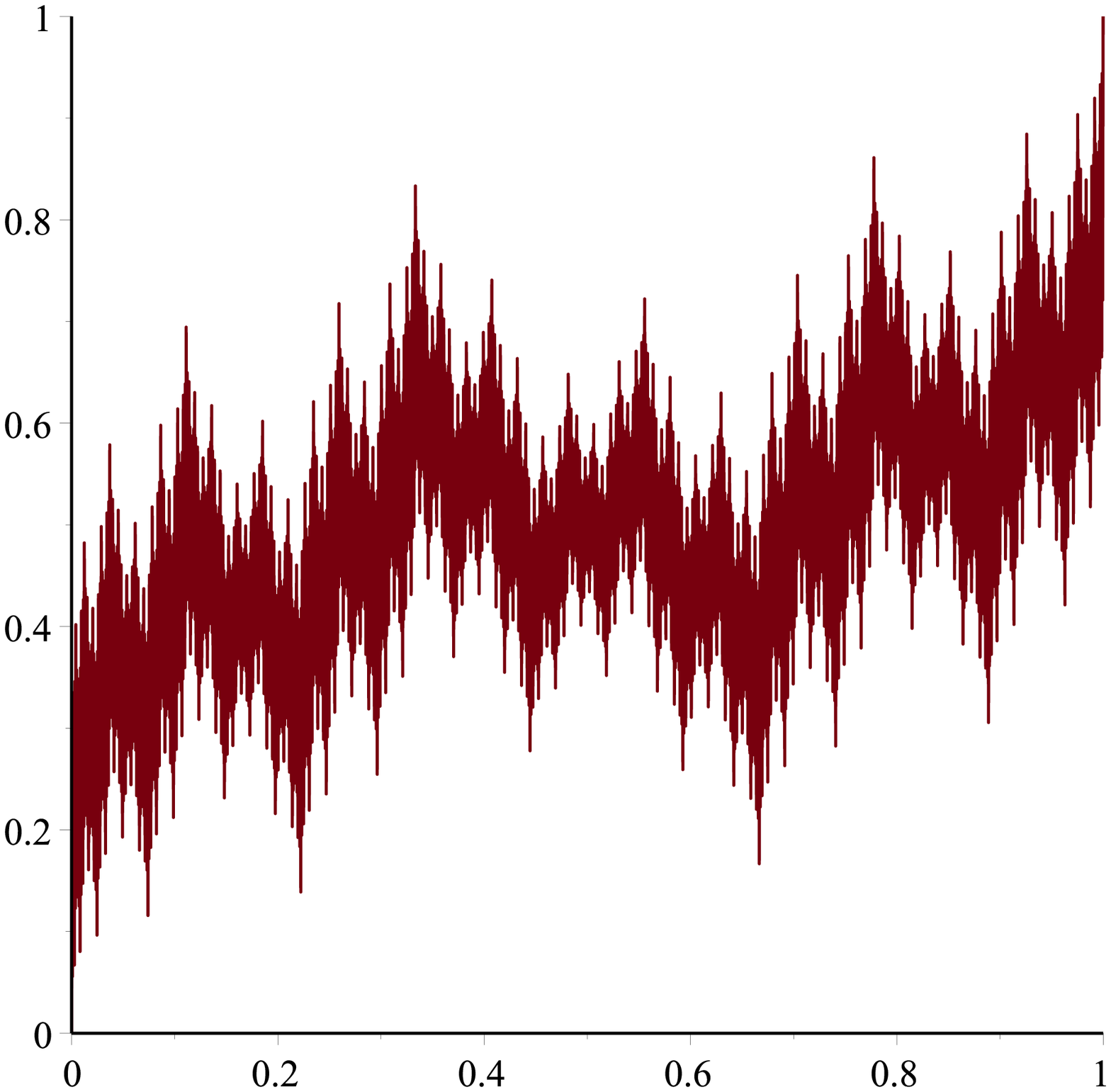, height=.25\textheight, width=.35\textwidth} \qquad\quad
\epsfig{file=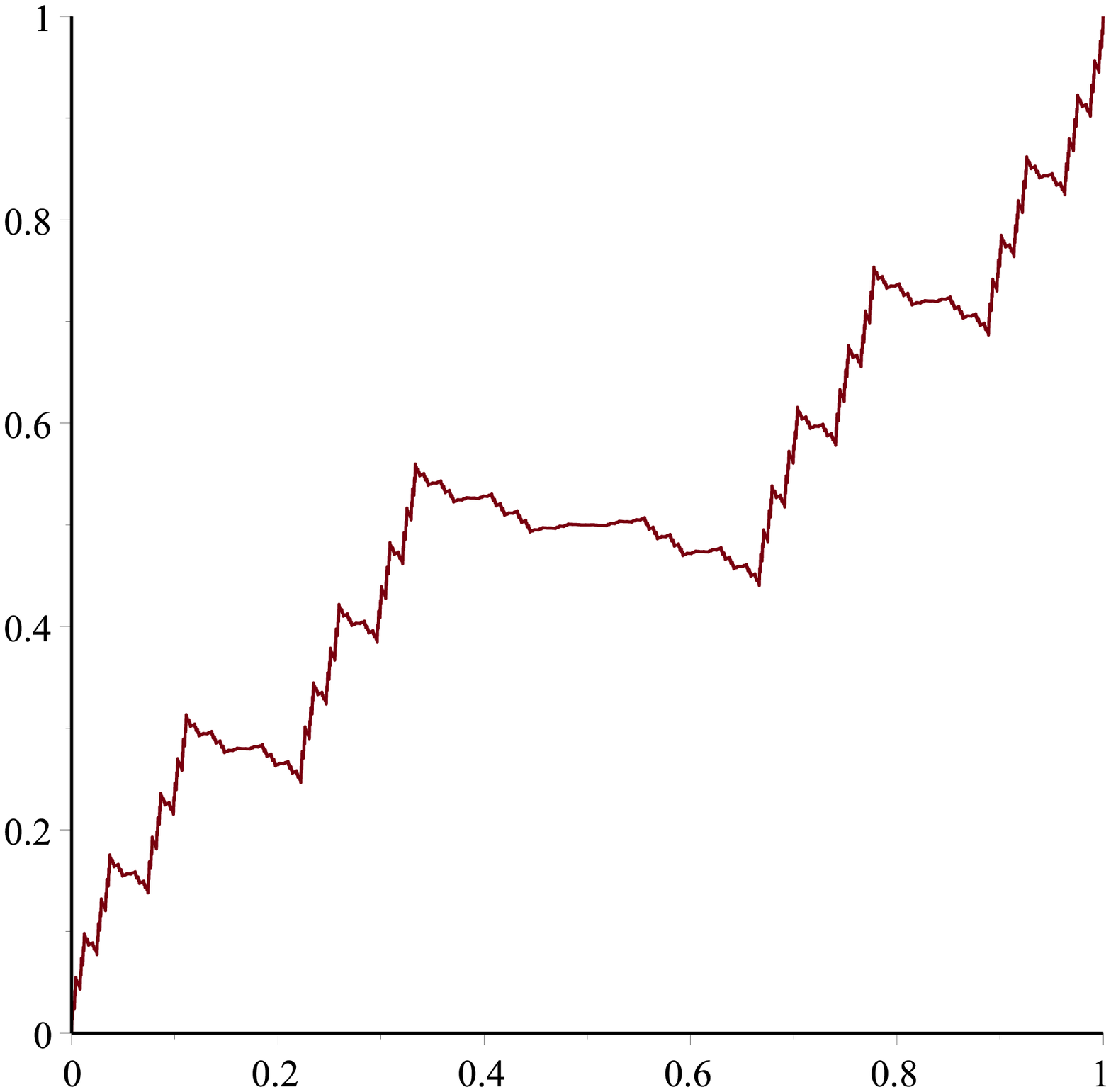, height=.25\textheight, width=.35\textwidth}
\caption{Graph of $F_{1,a}$ for $a=5/6$ (Perkins' function; left) and $a=\hat{a}_\infty\doteq .5598$ (right).}
\label{fig:Okamoto-graphs}
\end{center}
\end{figure}

\begin{figure}
\begin{center}
\begin{picture}(160,150)(0,0)
\put(30,20){\line(1,0){125}}
\put(30,20){\line(0,1){125}}
\put(30,145){\line(1,0){125}}
\put(155,20){\line(0,1){125}}
\put(30,20){\line(1,3){25}}
\put(55,95){\line(1,-2){25}}
\put(80,45){\line(1,3){25}}
\put(105,120){\line(1,-2){25}}
\put(130,70){\line(1,3){25}}
\put(55,18){\line(0,1){4}}
\put(80,18){\line(0,1){4}}
\put(105,18){\line(0,1){4}}
\put(130,18){\line(0,1){4}}
\put(28,95){\line(1,0){4}}
\put(28,45){\line(1,0){4}}
\put(27,15){\makebox(0,0)[tl]{$0$}}
\put(153,15){\makebox(0,0)[tl]{$1$}}
\put(17,25){\makebox(0,0)[tl]{$0$}}
\put(19,149){\makebox(0,0)[tl]{$1$}}
\put(16,97){\makebox(0,0)[tl]{$a$}}
\put(0,50){\makebox(0,0)[tl]{$a-b$}}
\end{picture}
\qquad\qquad
\epsfig{file=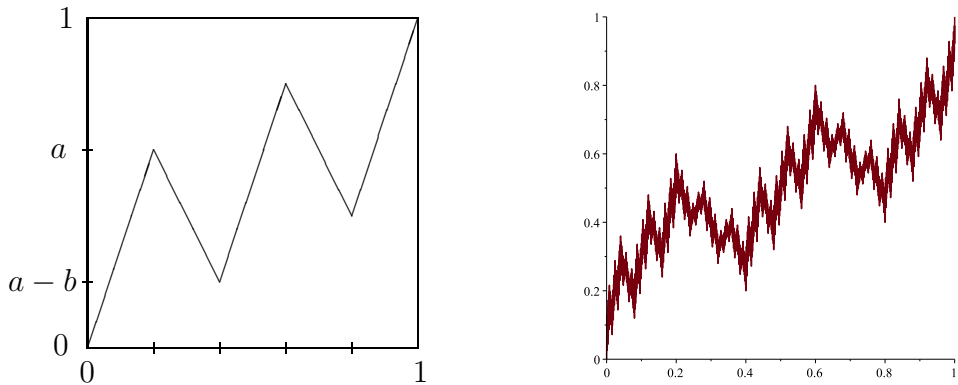, height=.26\textheight, width=.35\textwidth} \qquad\quad
\caption{The generating pattern and graph of $F_{2,a}$, shown here for $a=0.6$.}
\label{fig:5-part-function}
\end{center}
\end{figure}


The restriction $a>1/(N+1)$ is not necessary; when $a=1/(N+1)$ we have $b=0$ and $F_{N,a}$ is a generalized Cantor function. When $0<a<1/(N+1)$, we have $b<0$ and $F_{N,a}$ is strictly singular. Since the differentiability of such functions has been well-studied (e.g. \cite{Darst,Darst2,Eidswick,Falconer2,JKPS,KS1,Troscheit}), we will focus exclusively on the case $a>1/(N+1)$, when $F_{N,a}$ is of unbounded variation. However, see \cite{Allaart} for a detailed analysis of the case $N=1$, $a\leq 1/2$.

Our main goal is to study the finite and infinite derivatives of $F_{N,a}$, thereby extending results of Okamoto \cite{Okamoto} and Allaart \cite{Allaart}. We first show that for each $N$ the differentiability of $F_{N,a}$ follows the trichotomy discovered by Okamoto \cite{Okamoto} for the case $N=1$: There are thresholds $\tilde{a}_0$ and $a_0^*$ (depending on $N$) such that $F_{N,a}$ is nowhere differentiable for $a\geq a_0^*$; nondifferentiable almost everywhere for $\tilde{a}_0\leq a<a_0^*$; and differentiable almost everywhere for $a<\tilde{a}_0$. We moreover compute the Hausdorff dimension of the exceptional sets implicit in the above statement.

In \cite{Allaart} a surprising connection was found between the infinite derivatives of $F_{1,a}$ and expansions of real numbers in noninteger bases. Our aim here is to generalize this result. We first give an explicit description of the set $\DD_\infty(a)$ of points $x$ for which $F_{N,a}'(x)=\pm\infty$, and then show that this set is closely related to the set $\mathcal{A}_\beta$ of real numbers which have a unique expansion in base $\beta$ over the alphabet $\{0,1,\dots,N\}$, where $\beta=1/a$. This allows us to express the Hausdorff dimension of $\DD_\infty(a)$ directly in terms of the dimension of $\mathcal{A}_\beta$, which is known to vary continuously with $\beta$ and can be calculated explicitly for many values of $\beta$; see \cite{KKL,KongLi}. We also take advantage of other recent breakthroughs in the theory of $\beta$-expansions to obtain a complete classification of the cardinality of $\DD_\infty(a)$.

To end this introduction, we point out that the box-counting dimension of the graph of $F_{N,a}$ is given by
\begin{equation*}
\dim_B \graph(F_{N,a})=1+\frac{\log\big(2(N+1)a-1\big)}{\log(2N+1)}, \qquad \frac{1}{N+1}<a<1.
\end{equation*}
This follows easily from the self-affine structure of $F_{N,a}$, for instance by using Example 11.4 of Falconer \cite{Falconer}. The Hausdorff dimension, on the other hand, does not appear to be known for any value of $a>1/(N+1)$, even when $N=1$; see the remark in the introduction of \cite{Allaart}.

\section{Main results} \label{sec:results}

\subsection{Finite derivatives} \label{subsec:finite}

Following standard convention, we consider a function $f$ to be {\em differentiable} at a point $x$ if $f$ has a well-defined {\em finite} derivative at $x$. For each $N\in\NN$, let $a_{\min}:=a_{\min}(N):=1/(N+1)$, let
\begin{equation}
a_0^*:=a_0^*(N):=\frac{3N+1}{(N+1)(2N+1)},
\end{equation}
and let $\tilde{a}_0:=\tilde{a}_0(N)$ be the unique root in $(a_{\min},1)$ of the polynomial equation
\begin{equation}
(2N+1)^{2N+1}a^{N+1}\big((N+1)a-1\big)^N=N^N.
\label{eq:defining-a0}
\end{equation}
The first ten values of $\tilde{a}_0(N)$ are shown in Table \ref{tab:thresholds} below. Asymptotically, $\tilde{a}_0(N)\sim (1+\sqrt{2})/2N$ as $N\to\infty$; see Proposition \ref{prop:thresholds} below.

The case $N=1$ of the following theorem is due to Okamoto \cite{Okamoto}, with the exception of the boundary value $a=\tilde{a}_0$, which was addressed by Kobayashi \cite{Kobayashi}.


\begin{theorem} \label{thm:differentiability}
\begin{enumerate}[(i)]
\item If $a_0^*\leq a<1$, then $F_{N,a}$ is nowhere differentiable;
\item If $\tilde{a}_0\leq a<a_0^*$, then $F_{N,a}$ is nondifferentiable almost everywhere, but is differentiable at uncountably many points;
\item If $a_{\min}<a<\tilde{a}_0$, then $F_{N,a}$ is differentiable almost everywhere, but is nondifferentiable at uncountably many points.
\end{enumerate}

\bigskip
Moreover, if $F_{N,a}$ is differentiable at a point $x\in(0,1)$, then $F_{N,a}'(x)=0$.
\end{theorem}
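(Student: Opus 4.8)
The plan is to reduce the whole statement to the asymptotics of a single sequence of slope products attached to $x$. Write the base-$(2N+1)$ expansion $x=0.\omega_1\omega_2\cdots$ with digits $\omega_k\in\{0,1,\dots,2N\}$, and for a digit $d$ put $s_d:=(2N+1)a$ if $d$ is even and $s_d:=-(2N+1)b$ if $d$ is odd. A direct induction on the defining recursion shows that on the level-$n$ cylinder $I_n(x)=[u_n,v_n]$ (the interval between consecutive points $i/(2N+1)^n$ containing $x$) the function $f_n$ is linear with slope $P_n(x):=\prod_{k=1}^n s_{\omega_k}$, and that $F_{N,a}$ agrees with $f_n$ at the endpoints $u_n,v_n$; here one uses that $(y_{i+1}-y_i)(2N+1)$ equals $(2N+1)a$ on the $N+1$ ``up'' intervals and $-(2N+1)b$ on the $N$ ``down'' intervals. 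Thus $P_n(x)$ is exactly the two-sided secant slope of $F_{N,a}$ across $I_n(x)$. First I would dispatch the ``Moreover'' clause: if $F_{N,a}'(x)=:\ell$ exists and is finite, then since the secant slope over $I_n(x)$ is a convex combination of the two one-sided secant slopes from $x$ (both tending to $\ell$), we get $P_n(x)\to\ell$; but $P_{n+1}(x)/P_n(x)=s_{\omega_{n+1}}$ takes only the two values $(2N+1)a>1$ and $-(2N+1)b<0$, so $|s_{\omega_{n+1}}-1|$ is bounded away from $0$, and were $\ell\neq0$ the ratios would have to converge to $1$. Hence $\ell=0$, and $F_{N,a}$ is differentiable at $x$ only if $P_n(x)\to0$.

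Next I would prove the converse, yielding the clean dichotomy that for $x$ not a $(2N+1)$-adic rational, $F_{N,a}$ is differentiable at $x$ iff $P_n(x)\to0$. The oscillation of $F_{N,a}$ on $I_n(x)$ equals $|P_n(x)|(2N+1)^{-n}$. Given small $h>0$, choose $n$ with $(2N+1)^{-(n+1)}\le h<(2N+1)^{-n}$; then $x$ and $x+h$ lie in $I_n(x)$ or its immediate right neighbour $I_n^+$, so $|F_{N,a}(x+h)-F_{N,a}(x)|\le(|P_n|+|P_n^+|)(2N+1)^{-n}\le(2N+1)(|P_n|+|P_n^+|)h$. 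The one structural point to check is that $|P_n^+|\le (a/b)|P_n|$ even across a carry: a right-carry replaces a trailing block of digits $2N$ by a block of $0$'s, and since $0$ and $2N$ are both even this involves a single parity change, so the two products differ only by a bounded factor $s_{\omega_m+1}/s_{\omega_m}$. Thus $P_n\to0$ forces $F_{N,a}'(x)=0$, and the symmetric estimate handles $h<0$. The countably many $(2N+1)$-adic rationals have both one-sided expansions ending in the even digits $0$ or $2N$, so there $|P_n|\to\infty$ and $F_{N,a}$ is not differentiable; being countable they are irrelevant to the measure- and cardinality-statements.

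It remains to read off (i)--(iii) from the growth of $\log|P_n(x)|=k_nL_+ +(n-k_n)L_-$, where $k_n$ counts even digits among $\omega_1,\dots,\omega_n$, $L_+:=\log((2N+1)a)>0$, and $L_-:=\log((2N+1)b)$. A short computation gives $(2N+1)b\ge1\iff a\ge a_0^*$, so for $a\ge a_0^*$ both $L_+,L_-\ge0$, whence $|P_n|\ge1$ for every $x$ and $F_{N,a}$ is nowhere differentiable, which is (i). For $a<a_0^*$ one has $L_-<0$. Under Lebesgue measure the digits are i.i.d.\ uniform on $\{0,\dots,2N\}$, so by the strong law $k_n/n\to(N+1)/(2N+1)$ a.e., giving $\tfrac1n\log|P_n|\to\Lambda:=\tfrac1{2N+1}\log\!\big((2N+1)^{2N+1}a^{N+1}b^N\big)$ a.e. Substituting $b=((N+1)a-1)/N$ shows that $\Lambda=0$ is precisely equation \eqref{eq:defining-a0}, i.e.\ $a=\tilde a_0$. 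Hence for $a<\tilde a_0$ we have $\Lambda<0$, $P_n\to0$ a.e., and $F_{N,a}'=0$ a.e.; for $\tilde a_0<a<a_0^*$ we have $\Lambda>0$, $|P_n|\to\infty$ a.e., and $F_{N,a}$ is non-differentiable a.e.

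Finally I would handle the borderline $a=\tilde a_0$ and the uncountable exceptional sets. At $a=\tilde a_0$ the sequence $\log|P_n|$ is, a.e., a mean-zero random walk with bounded nondegenerate i.i.d.\ increments (as $L_+\neq L_-$), hence recurrent with $\limsup\log|P_n|=+\infty$ a.e., so $P_n\not\to0$ a.e.\ and $F_{N,a}$ is still non-differentiable a.e., placing $a=\tilde a_0$ in case (ii). For the exceptional points I would prescribe digit frequencies: in case (iii) ($L_+>0$) every $x$ with only even digits satisfies $|P_n|=((2N+1)a)^n\to\infty$ and is non-differentiable, and there are uncountably many such $x$ since $N+1\ge2$; in case (ii) ($L_-<0$) I would take $x$ whose even digits occur along a position set of density $0$ (below the threshold $-L_-/(L_+-L_-)$) but infinitely often with free choices among the $N+1\ge2$ available even digits, so that $\tfrac1n\log|P_n|\to L_-<0$, whence $P_n\to0$ and $x$ is a point of differentiability, with the free choices producing uncountably many such $x$ (this sparse construction is needed because for $N=1$ there is only one odd digit). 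I expect the main obstacle to be the sufficiency estimate of the second paragraph—controlling $F_{N,a}(x+h)-F_{N,a}(x)$ uniformly for all small $h$ through the adjacent cylinder, and in particular verifying $|P_n^+|\le C|P_n|$ across carries, which is exactly where the evenness of the extreme digits $0$ and $2N$ is essential—together with the recurrence argument at $a=\tilde a_0$, which replaces the law of large numbers by genuine fluctuation control.
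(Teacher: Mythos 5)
Your proposal is correct, and its core coincides with the paper's own argument: your $P_n(x)$ is exactly the paper's $f_n'(x)$ from \eqref{eq:slopes}; your ratio argument for the ``Moreover'' clause is the paper's observation that $f_{n+1}'(x)/f_n'(x)\in\{(2N+1)a,-(2N+1)b\}$ cannot tend to $1$; your oscillation estimate with the neighbor-slope bound $|P_n^+|\le (a/b)|P_n|$ (which you prove by the digit-carry argument, using that $0$ and $2N$ are both even, and which the paper states as \eqref{eq:consecutive-slope-ratios} via induction) reproduces Lemma \ref{lem:zero-derivative}; your criterion $(2N+1)b\ge 1\iff a\ge a_0^*$ is the paper's proof of (i); and your identification of $\tilde{a}_0$ as the zero of the a.e.\ exponent $\Lambda$ is precisely the paper's combination of Borel's normal number theorem with $\phi_N(\tilde{a}_0)=N/(2N+1)$ and the monotonicity of $\phi_N$. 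You genuinely diverge in two places. First, at the critical value $a=\tilde{a}_0$ the paper invokes the law of the iterated logarithm, while you invoke recurrence of a mean-zero, nondegenerate, bounded-increment random walk to get $\limsup_n\log|P_n(x)|=+\infty$ a.e.; both are standard and equally adequate. Second, and more substantially, for the uncountable exceptional sets in (ii) and (iii) the paper simply defers to Theorem \ref{thm:Hdim}, whose proof runs through the Besicovitch--Eggleston frequency-set dimension formula and a mass-distribution upper bound (Lemma \ref{lem:dimensions}), whereas you construct the sets by hand: all-even-digit points give uncountably many points of nondifferentiability in case (iii), and points whose even digits occupy a density-zero set of positions, with infinitely many free choices among the $N+1\ge 2$ even digits, give uncountably many points of differentiability in case (ii), including at the boundary $a=\tilde{a}_0$ (which for $N=1$ the paper credits to Kobayashi). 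Your constructions are more elementary and make Theorem \ref{thm:differentiability} self-contained, at the cost of yielding only cardinality; the paper's heavier route is what produces the Hausdorff dimensions of these exceptional sets, which it needs anyway for Theorem \ref{thm:Hdim}.
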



Statements (ii) and (iii) of the above theorem involve uncountably large sets of Lebesgue measure zero; it is of interest to determine their Hausdorff dimension. Let
\begin{equation*}
\DD_0(a):=\DD_0^{(N)}(a):=\{x\in(0,1): F_{N,a}'(x)=0\}.
\end{equation*}
Define the functions
\begin{equation*}
\phi_N(a):=\frac{\log\big((2N+1)a\big)}{\log(Na)-\log\big((N+1)a-1\big)}, \qquad a_{\min}<a<1,
\end{equation*}
and
\begin{equation*}
h_N(p):=-\frac{1}{\log(2N+1)}\left[p\log\left(\frac{p}{N}\right)+(1-p)\log\left(\frac{1-p}{N+1}\right)\right], \qquad 0<p<1.
\end{equation*}
For a set $E\subset \RR$, we denote the Hausdorff dimension of $E$ by $\dim_H E$.

\begin{theorem} \label{thm:Hdim}
\begin{enumerate}[(i)]
\item If $\tilde{a}_0\leq a<a_0^*$, then $\dim_H \DD_0^{(N)}(a)=h_N(\phi_N(a))$;
\item If $a_{\min}<a\leq\tilde{a}_0$, then $\dim_H \big((0,1)\backslash\DD_0^{(N)}(a)\big)=h_N(\phi_N(a))$;
\item The function $a\mapsto h_N(\phi_N(a))$ is concave on $a_{\min}<a<a_0^*$, takes on its maximum value of $1$ at $a=\tilde{a}_0$, and its limits as $a\downarrow a_{\min}$ and as $a\uparrow a_0^*$ are $\log(N+1)/\log(2N+1)$ and $\log N/\log(2N+1)$, respectively. 
\end{enumerate}
\end{theorem}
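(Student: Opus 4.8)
The plan is to prove Theorem~\ref{thm:Hdim} by establishing a symbolic coding of $\DD_0^{(N)}(a)$ and then applying standard multifractal/large-deviation dimension estimates. I would work in base $2N+1$: write $x\in(0,1)$ via its digits $\omega=(\omega_1,\omega_2,\dots)\in\{0,1,\dots,2N\}^\NN$, where each digit selects one of the $2N+1$ linear pieces of the generating pattern. The key observation is that the $2N+1$ pieces split into two types according to their slopes: there are $N+1$ ``ascending'' pieces with slope $(2N+1)a$ and $N$ ``descending'' pieces with slope $-(2N+1)b=-(2N+1)\big((N+1)a-1\big)/N$ (here using $(N+1)a-Nb=1$). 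Along the orbit of $x$, the derivative of $f_n$ at $x$ is a product of $n$ such slopes; hence if $k_n(x)$ denotes the number of ascending pieces among the first $n$ digits, then
\begin{equation*}
\log\big|f_n'(x)\big|=k_n(x)\log\big((2N+1)a\big)+\big(n-k_n(x)\big)\log\big((2N+1)b\big).
\end{equation*}
I expect the earlier analysis (leading to Theorem~\ref{thm:differentiability}) to have already shown that $F_{N,a}'(x)=0$ precisely when this quantity tends to $-\infty$, i.e.\ when the asymptotic frequency of ascending digits is small enough that the average log-slope is negative. The threshold frequency $p^*$ at which the average log-slope vanishes solves $p\log\big((2N+1)a\big)+(1-p)\log\big((2N+1)b\big)=0$, and a direct computation shows $p^*=\phi_N(a)$, explaining the appearance of $\phi_N$.

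With the coding in hand, the heart of the argument is to identify $\DD_0^{(N)}(a)$ (up to a set that does not affect Hausdorff dimension) with a set of digit sequences whose ascending-frequency lies in a prescribed range, and then to compute the dimension of such frequency-level sets. First I would make precise, via a Borel--Cantelli or law-of-large-numbers type argument on the two-letter frequency, which level sets $\{x:\lim k_n(x)/n=p\}$ or $\{x:\limsup\}$ constitute $\DD_0$ in regime (ii) and its complement in regime (iii). Then the dimension of the frequency set with ascending-frequency $p$ is the classical Besicovitch--Eggleston quantity: among the $(2N+1)^n$ cylinders of generation $n$, the ones with $k$ ascending digits number $\binom{n}{k}(N+1)^{k}N^{\,n-k}$ (choosing which of the $N+1$ ascending letters and which of the $N$ descending letters), so by Stirling the exponential growth rate at frequency $p$ is exactly $h_N(p)$ as defined. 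The dimension of the union over the relevant range of $p$ is then the supremum of $h_N$ over that range; because $h_N$ is concave and is maximized at the frequency where the entropy-weighting balances, this supremum is attained either at the endpoint $p=\phi_N(a)$ (yielding $h_N(\phi_N(a))$) or in the interior, and I would verify that in both regimes (ii) and (iii) the binding value is $p=\phi_N(a)$.

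For the lower bound on dimension I would construct a suitable Bernoulli (self-similar) measure $\mu_p$ on the digit space assigning each ascending letter probability $p/(N+1)$ and each descending letter probability $(1-p)/N$ with $p=\phi_N(a)$, verify that $\mu_p$ is supported on $\DD_0$ (resp.\ its complement), and apply the mass distribution principle together with a local-dimension computation $\dim_H\mu_p=h_N(p)$; the upper bound follows from a covering by generation-$n$ cylinders of the appropriate frequency together with a large-deviation count. For part (iii) of the theorem, the concavity of $a\mapsto h_N(\phi_N(a))$, the maximal value $1$ at $a=\tilde{a}_0$, and the two one-sided limits are then pure calculus: substituting the explicit $\phi_N$ into $h_N$ and differentiating, where the value $1$ at $\tilde a_0$ should fall out of the defining equation~\eqref{eq:defining-a0} for $\tilde a_0$ (indeed, $h_N(\phi_N(a))=1$ is equivalent to that polynomial identity after clearing logarithms), and the endpoint limits come from $\phi_N(a)\to 1$ as $a\downarrow a_{\min}$ and $\phi_N(a)\to 0$ as $a\uparrow a_0^*$.

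The main obstacle I anticipate is not the dimension bookkeeping but the careful measure-theoretic bridge between the pointwise derivative condition $F_{N,a}'(x)=0$ and the purely combinatorial frequency condition on digits. Two subtleties must be handled: (1) the derivative is governed by $f_n'(x)$ only along the full base-$(2N+1)$ subdivision, so one must control the behavior of difference quotients over non-dyadic-type increments and rule out the possibility that the derivative exists yet differs from the limit of $f_n'(x)$ --- this is presumably where the final sentence of Theorem~\ref{thm:differentiability} (that any finite derivative must be $0$) is used decisively; and (2) the boundary frequency $p=\phi_N(a)$ itself, where the log-slope average is exactly zero, requires a finer analysis (the derivative may oscillate rather than converge), so I would need to argue that this critical level set, whatever its precise membership, has the same dimension $h_N(\phi_N(a))$ and hence does not change the answer.
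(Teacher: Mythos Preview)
Your approach is essentially the paper's: the paper first proves $F_{N,a}'(x)=0 \Leftrightarrow f_n'(x)\to 0$ (Lemma~\ref{lem:zero-derivative}), converts this via \eqref{eq:slopes} into the sandwich \eqref{eq:frequency-sandwich} between the strict and non-strict $\liminf$ frequency level sets, and then computes the common dimension of those level sets by exactly the Besicovitch--Eggleston/Bernoulli-measure argument you outline (Lemma~\ref{lem:dimensions}). Your anticipated obstacle (2) about the boundary frequency is precisely how the paper disposes of it: $R_p$ and $\bar R_p$ (resp.\ $S_p$ and $\bar S_p$) have equal dimension by continuity of $h_N$, so the membership of the critical level set is immaterial.

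However, your bookkeeping systematically swaps the ascending frequency $k_n/n$ with the descending (odd-digit) frequency $i(n)/n=1-k_n/n$. Concretely: (a) the solution $p^*$ of $p\log((2N+1)a)+(1-p)\log((2N+1)b)=0$ is $1-\phi_N(a)$, not $\phi_N(a)$ --- it is the \emph{odd}-digit threshold that equals $\phi_N(a)$; (b) Stirling applied to $\binom{n}{k}(N+1)^kN^{n-k}$ with $k/n\to p$ gives $h_N(1-p)$, not $h_N(p)$, since the paper's $h_N$ takes the odd-digit frequency as argument; (c) your endpoint limits in (iii) are reversed: in fact $\phi_N(a)\to 0$ as $a\downarrow a_{\min}$ (because $b\to 0$) and $\phi_N(a)\to 1$ as $a\uparrow a_0^*$ (because $(2N+1)b\to 1$), which then gives $h_N(0)=\log(N+1)/\log(2N+1)$ and $h_N(1)=\log N/\log(2N+1)$ as stated. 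Errors (a) and (b) happen to cancel, so your final dimension formula is correct, but you should decide once and for all which frequency you are tracking and rewrite accordingly.
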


Observe that for $N\geq 2$, $\dim_H \DD_0(a)$ has a discontinuity at $a_0^*$, where it jumps from $\log N/\log(2N+1)>0$ to $0$; see Figure \ref{fig:Hausdorff-dimensions}.

\begin{figure}
\begin{center}
\epsfig{file=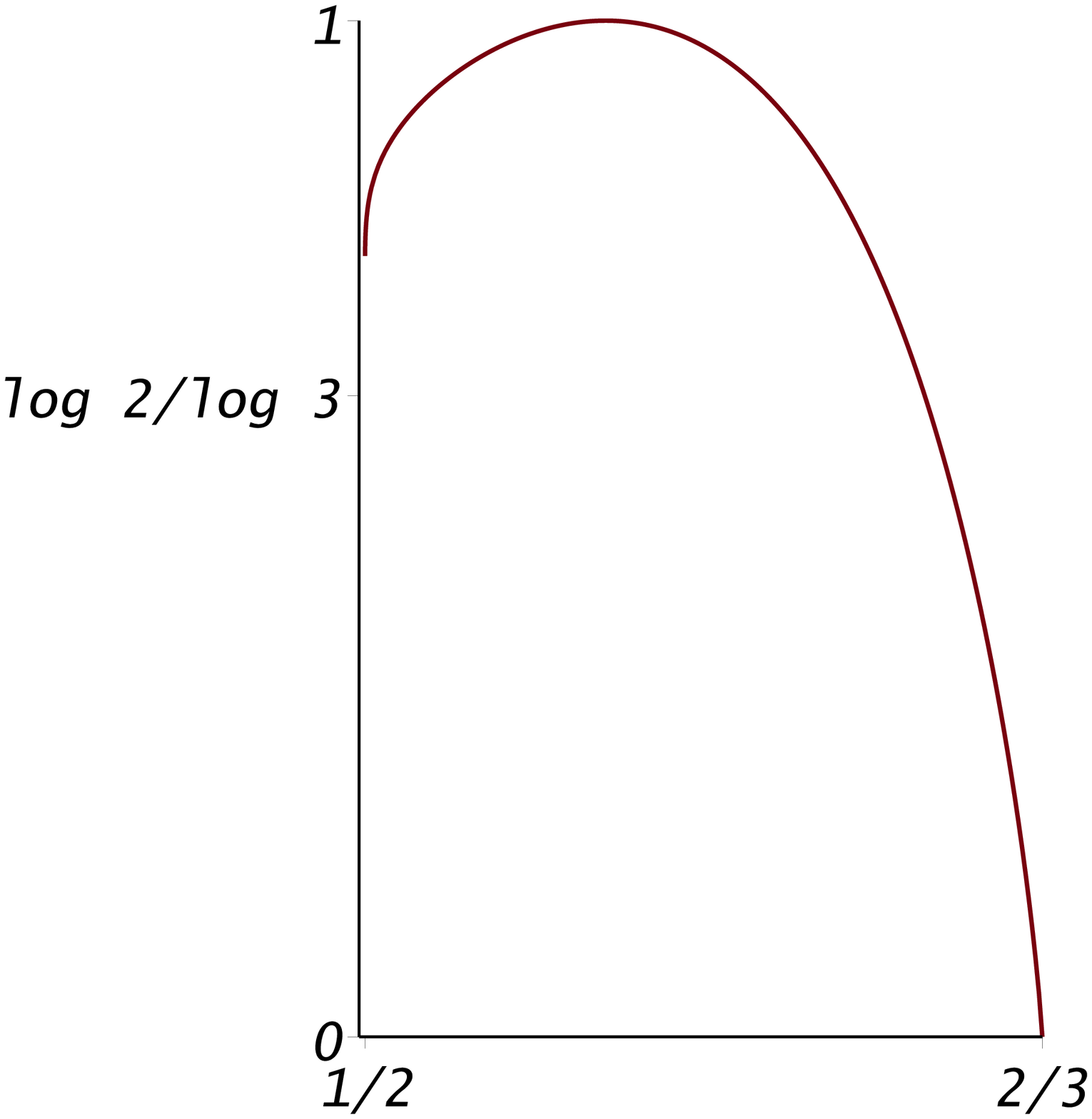, height=.25\textheight, width=.4\textwidth} \qquad\quad
\epsfig{file=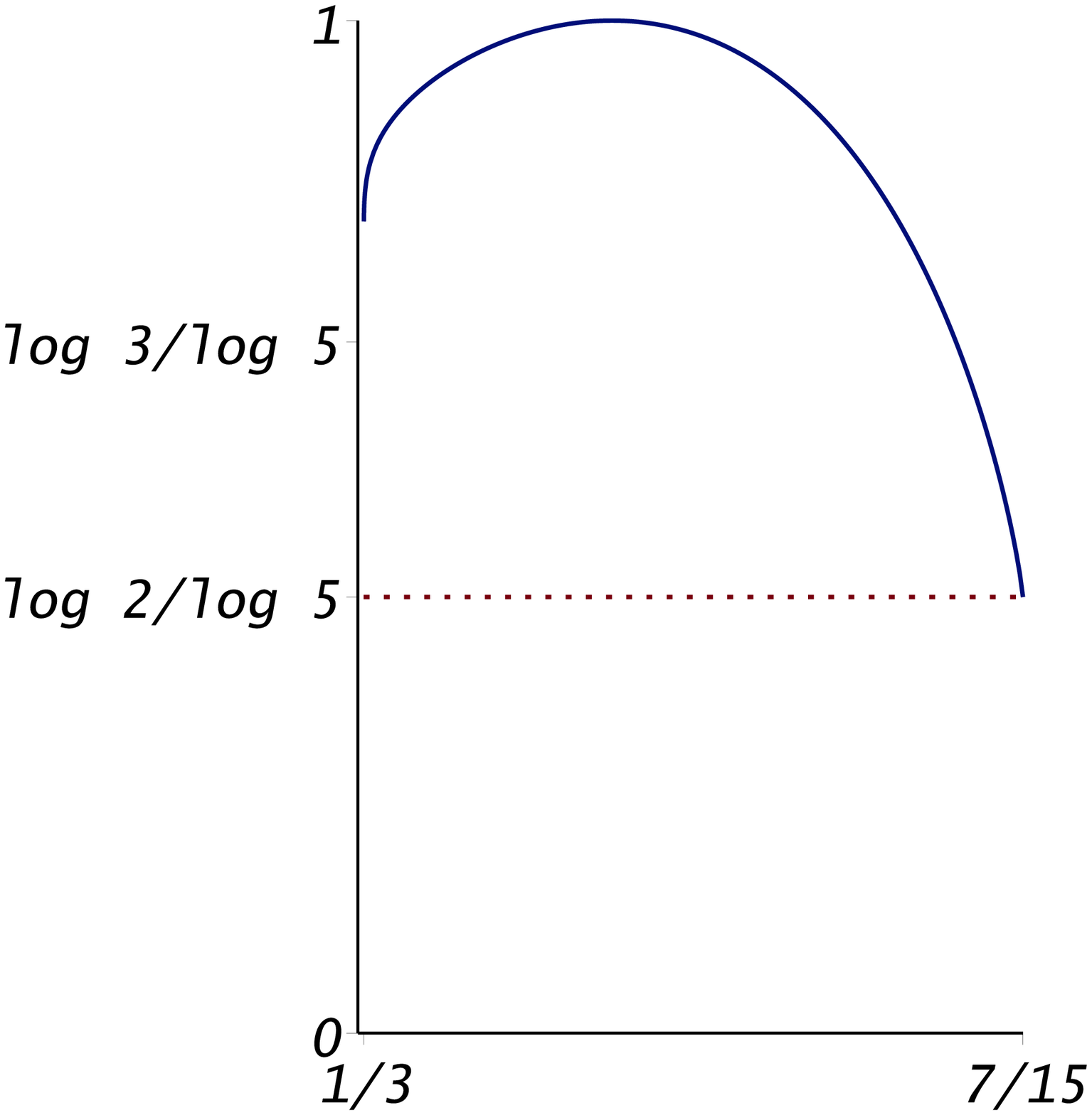, height=.25\textheight, width=.4\textwidth}
\caption{Graph of the function $h_N(\phi_N(a))$ from Theorem \ref{thm:Hdim} for $N=1$ (left) and $N=2$ (right). While not clearly visible, the limits at the left endpoints are $\log 2/\log 3$ and $\log 3/\log 5$, respectively.}
\label{fig:Hausdorff-dimensions}
\end{center}
\end{figure}

\begin{table}
\renewcommand{\arraystretch}{1.3}
\[
\begin{array}{r||c|c|c|c|c}
N & a_{\min}(N) & \tilde{a}_0(N) & a_0^*(N) & \hat{a}_\infty(N) & a_\infty^*(N)\\ \hline
1 & \frac12=.5000 & .5592 & \frac23\approx .6667 & .5598 & .6180 \\
2 & \frac13 \approx .3333 & .3835 & \frac{7}{15}\approx .4667 & .4047 & \frac12=.5000\\
3 & \frac14=.2500 & .2914 & \frac{5}{14}\approx .3571 & .3444 & .3660\\
4 & \frac15=.2000 & .2349 & \frac{13}{45}\approx .2889 & .2728 & \frac13\approx .3333\\
5 & \frac16\approx .1667 & .1967 & \frac{8}{33}\approx .2424 & .2534 & .2638\\
6 & \frac17\approx .1429 & .1692 & \frac{19}{91}\approx .2088 & .2104 & \frac14=.2500\\
7 & \frac18=.1250 & .1484 & \frac{11}{60}\approx .1833 & .2014 & .2071\\
8 & \frac19\approx .1111 & .1321 & \frac{25}{153}\approx .1634 & .1724 & \frac15=.2000\\
9 & \frac{1}{10}=.1000 & .1191 & \frac{14}{95}\approx .1474 & .1674 & .1708\\
10 & \frac{1}{11}\approx .0909 & .1084 & \frac{31}{231}\approx .1342 & .1463 & \frac16\approx .1667\\ \hline
\end{array}
\]
\caption{Five important thresholds for $a$ determining differentiability of $F_{N,a}$}
\label{tab:thresholds}
\end{table}

\subsection{Infinite derivatives} \label{subsec:infinite}

For $x\in[0,1)$, let 
\begin{equation*}
x=0.\xi_1\xi_2\xi_3\cdots=\sum_{i=1}^\infty \xi_i(2N+1)^{-i}
\end{equation*}
denote the expansion of $x$ in base $2N+1$, so $\xi_i\in\{0,1,\dots,2N\}$ for each $i$. When $x$ has two such expansions, we take the one ending in all zeros. Let 
$$M(x):=\#\{i\in\NN:\xi_i\ \mbox{is odd}\}.$$ 
We also write $\omega_i:=\xi_i/2$, and note that when $\xi_i$ is even, $\omega_i\in A:=\{0,1,\dots,N\}$. For $d\in A$, write $\bar{d}:=N-d$.

\begin{theorem} \label{thm:infinite-derivatives}
Let $a_{\min}<a<1$. A point $x\in(0,1)$ satisfies $F_{N,a}'(x)=\pm\infty$ if and only if $M(x)<\infty$ and the following two limits hold:
\begin{equation}
\lim_{n\to\infty}\big((2N+1)a\big)^n\left(1-\sum_{j=1}^\infty a^j\omega_{n+j}\right)=\infty,
\label{eq:for-right-derivative}
\end{equation}
and
\begin{equation}
\lim_{n\to\infty}\big((2N+1)a\big)^n\left(1-\sum_{j=1}^\infty a^j\overline{\omega_{n+j}}\right)=\infty.
\label{eq:for-left-derivative}
\end{equation}
Assuming all these conditions are satisfied, $F_{N,a}'(x)=\infty$ if $M(x)$ is even, and $F_{N,a}'(x)=-\infty$ if $M(x)$ is odd.
\end{theorem}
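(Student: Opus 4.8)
The plan is to reduce the two–sided statement to two one–sided statements, to read off the sign of the derivative from the self–affine scaling factors, and to identify the quantities in \eqref{eq:for-right-derivative}--\eqref{eq:for-left-derivative} as the controlling ``depths'' of $F:=F_{N,a}$ near $x$. First I would iterate the defining functional equation. Writing $s_i:=y_{i+1}-y_i$, a one–line computation from the definitions of the $y_i$ gives $s_i=a$ for $i$ even and $s_i=-b$ for $i$ odd. Letting $T$ denote the base–$(2N+1)$ shift, $u_n:=T^n x$, and $I_n(x)=[l_n,r_n]$ the level–$n$ cylinder of $x$, iteration yields
\[
F\big(l_n+(2N+1)^{-n}s\big)=c_n+P_n F(s),\qquad s\in[0,1],
\]
with $c_n:=F(l_n)$ and $P_n:=\prod_{i=1}^n s_{\xi_i}$. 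Hence $|P_n|=a^{\,n-m_n}b^{\,m_n}$, where $m_n:=\#\{i\le n:\xi_i\text{ odd}\}$, and $\sgn P_n=(-1)^{m_n}$; in particular the signed increment of $F$ across $I_n(x)$ is $P_n$, and $|P_n|(2N+1)^n=((2N+1)a)^n (b/a)^{m_n}$.

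From this formula $F(x)-F(l_n)=P_nF(u_n)$, so the left difference quotient along $l_n\uparrow x$ equals $(2N+1)^nP_nF(u_n)/u_n$, whose sign is $(-1)^{m_n}$ because $F\ge0$. If $M(x)=\infty$ then $m_n$ has both parities for infinitely many $n$ (and $u_n>0$ throughout, as $x$ then has infinitely many nonzero digits), producing left difference quotients that are $\ge 0$ infinitely often and $\le 0$ infinitely often with $l_n\to x^-$; this rules out $F'_{N,a}(x)=\pm\infty$ and establishes necessity of $M(x)<\infty$. When $M(x)<\infty$, I fix $n_0$ past the last odd digit; then $s_{\xi_i}=a$ for all $i>n_0$, so $P_n=P_{n_0}a^{\,n-n_0}$ has constant sign $\sigma:=(-1)^{M(x)}$ and $P_n(2N+1)^n=\sigma C((2N+1)a)^n$ with $C:=|P_{n_0}|a^{-n_0}>0$. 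This forces any infinite derivative to have sign $\sigma$, which is exactly the asserted sign rule, and reduces the theorem to proving, under $M(x)<\infty$ and (without loss of generality) $\sigma=+1$, that $F'_+(x)=+\infty\iff$\eqref{eq:for-right-derivative} and $F'_-(x)=+\infty\iff$\eqref{eq:for-left-derivative}.

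For the right derivative I would use the exact rescaling: for $z=x+(2N+1)^{-n}\tau\in I_n(x)$ with $z>x$,
\[
\frac{F(z)-F(x)}{z-x}=\sigma C((2N+1)a)^n\cdot\frac{F(u_n+\tau)-F(u_n)}{\tau}.
\]
Since the tail of $x$ is all even, $u_n$ lies in an ascending generating interval, and a short recursion gives $F(u_n)=(a-b)\sum_{j\ge1}a^{j-1}\omega_{n+j}$, hence $\sum_{j\ge1}a^j\omega_{n+j}=\tfrac{Na}{1-a}F(u_n)$ upon using $a-b=(1-a)/N$. Choosing $z$ to branch from $x$ exactly at level $n$ keeps $\tau$ bounded away from $0$ and $\infty$, and the smallest value $F$ attains just to the right of $u_n$ is the floor $(\omega_{n+1}+1)(a-b)$ of the first descending block, so a computation would give
\[
\inf_z\big(F(u_n+\tau)-F(u_n)\big)=(\omega_{n+1}+1)(a-b)-F(u_n)=\frac{1-a}{N}\Big(1-\sum_{j\ge1}a^j\omega_{(n+1)+j}\Big).
\]
Thus the extremal right difference quotients are of order $((2N+1)a)^n\big(1-\sum_{j\ge1}a^j\omega_{(n+1)+j}\big)$, and letting $z\to x^+$ (so $n\to\infty$) shows $F'_+(x)=+\infty$ exactly when this tends to $+\infty$, which after reindexing is \eqref{eq:for-right-derivative}. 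The left statement \eqref{eq:for-left-derivative} then follows verbatim after the reflection $x\mapsto 1-x$, which satisfies $F(1-x)=1-F(x)$ (the generating pattern is centrally symmetric, since $y_i+y_{2N+1-i}=1$) and sends $\xi_i\mapsto 2N-\xi_i$, hence $\omega_i\mapsto\overline{\omega_i}$, converting the right-hand condition into the left-hand one.

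The \emph{main obstacle} is the dip estimate in the third step: proving that the infimum of $F$ immediately to the right of $u_n$ is precisely the floor $(\omega_{n+1}+1)(a-b)$, that the associated $\tau$ stays bounded away from $0$ and $\infty$, and, most importantly, that these branch–level extrema genuinely control the full one–sided $\liminf$ over all $z\to x^+$, so that no intermediate point yields a smaller quotient. This is where self-affinity must be exploited uniformly in $n$, and where the $\beta$-expansion quantity $1-\sum_j a^j\omega_{n+j}$, comparing the tail digit string to the value $1$ in base $\beta=1/a$, arises intrinsically. I expect the borderline regime in which $\sum_j a^j\omega_{n+j}$ approaches $1$ (equivalently, where $F$ just touches the floor) to demand the most care, since there the sign of the extremal quotient is delicate.
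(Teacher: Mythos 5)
Your plan follows the same route as the paper's own proof: your step 3 is the paper's Lemma \ref{lem:basic-right-derivative} (the same comparison points, the same computation from the series representation \eqref{eq:simpler-series-expression}, producing the same quantity $\tfrac{1-a}{N}\bigl(1-\sum_{j\ge1}a^j\omega_{n+j}\bigr)$), your zoom past the last odd digit is the paper's reduction of $M(x)=m>0$ to $m=0$, and the symmetry $F(1-x)=1-F(x)$ is used for the left derivative exactly as in the paper. Moreover, the step you flag as the main obstacle is not where the difficulty lies in your framework: granting the dip identity, every $z>x$ branching from $x$ at level $n$ has numerator at least the dip and denominator $\tau<1$, so its quotient is at least a constant times $((2N+1)a)^n\bigl(1-\sum_{j\ge1}a^j\omega_{n+1+j}\bigr)$, and sufficiency follows at once (the paper's two-case argument in Lemma \ref{lem:basic-right-derivative} accomplishes the same thing). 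The genuine gaps are in two steps you treat as automatic.

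First, the reflection step fails at the points $x=j/(2N+1)^n$, and these points carry real content. Under the paper's convention that ambiguous expansions end in zeros, the digit string of $1-x$ equals $\bigl(2N-\xi_i(x)\bigr)_i$ only when $x$ has an infinite expansion; for finite expansions the complemented string ends in all $(2N)$'s (the disallowed representation), the convention expansion of $1-x$ instead increments the last nonzero digit and has a zero tail, and in particular $M(1-x)=M(x)\pm1$. Your step-3 criterion is proved relative to convention digits (the cylinders $I_n$ are tied to that convention), so applying it to $1-x$ with the digits $\overline{\omega_i(x)}$ is illegitimate: it would lead you to conclude that $F_+'(1-x)$ is not infinite, when in fact it equals $-(-1)^{M(x)}\infty$. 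These are exactly the points where the ``only if'' direction must do work --- they satisfy $M(x)<\infty$ and \eqref{eq:for-right-derivative}, so one must prove $F'(x)\neq\pm\infty$ --- and the paper handles them in a separate opening step of its proof, using \eqref{eq:consecutive-slope-ratios} to see that the two one-sided derivatives have opposite signs. Second, your necessity argument produces a comparison point at level $n$ only when $\omega_{n+1}<N$: if $\omega_{n+1}=N$, then $I_{n+1}(x)$ has the same right endpoint as $I_n(x)$, so no $z>x$ branches at level $n$ and your extremal point does not exist. As written you obtain the divergence in \eqref{eq:for-right-derivative} only along the subsequence $\{n:\omega_{n+1}<N\}$. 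The full limit does follow, because when $\omega_{n+1}=N$ one has $t_n=at_{n+1}-\bigl((N+1)a-1\bigr)$ with $t_m:=1-\sum_{j\ge1}a^j\omega_{m+j}$, so that $((2N+1)a)^{n+1}t_{n+1}\ge(2N+1)\,((2N+1)a)^nt_n$ and the quantity in \eqref{eq:for-right-derivative} can only grow across blocks of the digit $N$; but this bridge must be noticed and supplied. (The paper sidesteps it by letting its point $z_n$ exit $I_{n-1}(x)$ when necessary, justified by the observation that the adjacent slope pattern is independent of $\xi_n$, which yields the condition for every $n$ simultaneously.) Two smaller slips: $\tau$ is bounded away from $0$ only at the extremal points, not for all branching $z$; and your $M(x)=\infty$ argument tacitly uses $F>0$ on $(0,1)$, which deserves a line of proof.
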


While the conditions \eqref{eq:for-right-derivative} and \eqref{eq:for-left-derivative} may look complicated at first sight, readers familiar with $\beta$-expansions will recognize the summations appearing in them. For a real number $1<\beta<N+1$ and $x\in\RR$, we call an expression of the form
\begin{equation}
x=\sum_{j=1}^\infty \frac{\omega_j}{\beta^j}, \qquad\mbox{where $\omega_1,\omega_2,\dots \in A$}
\label{eq:beta-expansion}
\end{equation}
an {\em expansion of $x$ in base $\beta$ over the alphabet $A$} (or simply, a {\em $\beta$-expansion}). Clearly such an expansion exists if and only if $0\leq x\leq N/(\beta-1)$. It is well known (see \cite{Sidorov}) that almost every $x$ in this interval has a continuum of $\beta$-expansions. For the purpose of this article, we reduce the interval a bit further and consider the so-called {\em univoque set}
\begin{equation*}
\A_\beta:=\{x\in ((N-\beta+1)/(\beta-1),1): x\ \mbox{has a {\em unique} expansion of the form \eqref{eq:beta-expansion}}\}.
\end{equation*}
Let $\Omega:=A^\NN$. For $\beta>1$, let $\Pi_\beta:\Omega\to \RR$ denote the projection map given by
\begin{equation*}
\Pi_\beta(\omega)=\sum_{j=1}^\infty \frac{\omega_j}{\beta^j}, \qquad \omega=\omega_1\omega_2\cdots \in\Omega,
\end{equation*}
so that \eqref{eq:beta-expansion} can be written compactly as $x=\Pi_\beta(\omega)$. Let $\sigma$ denote the left shift map on $\Omega$; that is, $\sigma(\omega_1\omega_2\cdots)=\omega_2\omega_3\cdots$. Define the set
\begin{equation*}
\UU_\beta:=\Pi_\beta^{-1}(\A_\beta).
\end{equation*}
It is essentially due to Parry \cite{Parry} (see also \cite[Lemma 5.1]{Allaart}) that
\begin{equation*}
\omega\in\UU_\beta \qquad \Longleftrightarrow \qquad \Pi_\beta(\sigma^n(\omega))<1\ \mbox{and}\ \Pi_\beta(\sigma^n(\bar{\omega}))<1\ \mbox{for all $n\geq 0$},
\end{equation*}
and this, together with Theorem \ref{thm:infinite-derivatives}, suggests a close connection between the set
\begin{equation*}
\DD_\infty(a):=\DD_\infty^{(N)}(a):=\{x\in(0,1): F_{N,a}'(x)=\pm\infty\}
\end{equation*}
and the univoque set $\A_\beta$, where $\beta=1/a$. The size of $\A_\beta$ has been well-studied, starting with the remarkable theorem of Glendinning and Sidorov \cite{GlenSid}. There are two pertinent thresholds, which we now define. First, for $N\in\NN$, let
\begin{equation*}
G(N):=\begin{cases}
m+1 & \mbox{if $N=2m$},\\
\frac{m+\sqrt{m^2+4m}}{2} & \mbox{if $N=2m-1$}.
\end{cases}
\end{equation*}
Baker \cite{Baker} calls $G(N)$ a {\em generalized golden ratio}, because $G(1)=(1+\sqrt{5})/2$.

Next, recall that the {\em Thue-Morse sequence} is the sequence $(\tau_j)_{j=0}^\infty$ of $0$'s and $1$'s given by $\tau_j=s_j \mod 2$, where $s_j$ is the number of $1$'s in the binary representation of $j$. Thus,
\begin{equation*}
(\tau_j)_{j=0}^\infty=0110\ 1001\ 1001\ 0110\ 1001\ 0110\ 0110\ 1001\ \dots
\end{equation*}
For each $N\in\NN$, define a generalized Thue-Morse sequence $\tau^{(N)}=\big(\tau_i^{(N)}\big)_{i=1}^\infty$ by
\begin{equation*}
\tau_i^{(N)}:=\begin{cases}
m-1+\tau_i & \mbox{if $N=2m-1$},\\
m+\tau_i-\tau_{i-1} & \mbox{if $N=2m$}.
\end{cases}
\end{equation*}
Finally, let $\beta_c:=\beta_c(N)$ be the {\em Komornik-Loreti constant} \cite{KomLor,KomLor2}; that is, $\beta_c$ is the unique positive value of $\beta$ such that
\begin{equation*}
\Pi_\beta\left(\tau^{(N)}\right)=1.
\end{equation*}
The following theorem is due to Glendinning and Sidorov \cite{GlenSid} for $N=1$, and to Kong et al. \cite{KLD} and Baker \cite{Baker} for $N\geq 2$. 

\begin{theorem} \label{thm:folklore}
The set $\A_\beta$ is:
\begin{enumerate}[(i)]
\item empty if $\beta\leq G(N)$;
\item nonempty but countable if $G(N)<\beta<\beta_c(N)$;
\item uncountable but of Hausdorff dimension zero if $\beta=\beta_c(N)$;
\item of positive Hausdorff dimension if $\beta>\beta_c(N)$.
\end{enumerate}
\end{theorem}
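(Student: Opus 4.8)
The plan is to push the entire statement into symbolic dynamics and then track how the set of admissible sequences grows as $\beta$ increases. First I would note that the Parry-type condition quoted just above the theorem says exactly that $\omega\in\UU_\beta$, and that since every point of $\A_\beta$ has, by definition, a single preimage, the projection $\Pi_\beta$ restricts to a \emph{bijection} from $\UU_\beta$ onto $\A_\beta$. Hence the cardinality of $\A_\beta$ equals that of $\UU_\beta$. Moreover $\Pi_\beta$ is bi-Hölder on $\UU_\beta$, so by the standard entropy formula one has $\dim_H\A_\beta = h(\overline{\UU_\beta})/\log\beta$, where $h$ denotes topological entropy of the associated subshift. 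In this way all four parts reduce to controlling the size and complexity of $\UU_\beta\subset\Omega$.

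Next I would replace the analytic conditions $\Pi_\beta(\sigma^n\omega)<1$ and $\Pi_\beta(\sigma^n\bar\omega)<1$ by their standard lexicographic reformulation in terms of the \emph{quasi-greedy expansion} $\alpha(\beta)=\alpha_1\alpha_2\cdots$ of $1$ in base $\beta$: a sequence is univoque precisely when $\sigma^n(\omega)\prec\alpha(\beta)$ at every index with $\omega_n<N$, together with the mirrored condition $\sigma^n(\bar\omega)\prec\alpha(\beta)$ at every index with $\omega_n>0$. The decisive structural fact is \emph{monotonicity}: $\alpha(\beta)$ increases lexicographically as $\beta$ increases, so the defining constraints relax and $\UU_\beta$ is nondecreasing in $\beta$. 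This monotonicity is what arranges the four regimes as a nested sequence separated by two thresholds.

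With this in hand the thresholds can be located. For part (i), $G(N)$ is characterized as the exact value below which the lexicographic ceiling is so low that no sequence can satisfy both the condition and its mirror simultaneously, so $\UU_\beta$ is empty and hence so is $\A_\beta$; the first admissible sequence appears only for $\beta>G(N)$. For part (ii), just above $G(N)$ such sequences exist, but for $G(N)<\beta<\beta_c$ one shows they are forced into an eventually-periodic form, yielding a countably infinite set. The upper threshold is pinned down by its defining property $\Pi_{\beta_c}(\tau^{(N)})=1$, i.e. $\alpha(\beta_c)=\tau^{(N)}$: this is exactly the smallest $\beta$ at which $\UU_\beta$ becomes uncountable, because the self-similar substitution structure of the generalized Thue-Morse sequence first permits a Cantor-like family of admissible sequences precisely when the lexicographic ceiling reaches $\tau^{(N)}$.

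The dimension statements in (iii) and (iv) are where the real work lies, and I expect (iii) to be the main obstacle. At $\beta=\beta_c$ I would invoke the fact that the Thue-Morse sequence has \emph{subexponential} (in fact linear) subword complexity; since the critical univoque subshift is governed block-by-block by $\tau^{(N)}$, it inherits this low complexity, so its topological entropy is $0$ and therefore $\dim_H\A_{\beta_c}=0$ even though the set is uncountable. For part (iv), once $\beta>\beta_c$ the strict inequality $\alpha(\beta)\succ\tau^{(N)}$ opens a lexicographic ``gap'': one can exhibit a finite admissible block after which two distinct digit-choices both keep every shift below $\alpha(\beta)$, and freely concatenating such blocks embeds a subshift of finite type with positive entropy into $\UU_\beta$, giving positive Hausdorff dimension. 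The delicate points throughout are the precise combinatorics of the generalized Thue-Morse substitution needed for the entropy-zero bound at criticality, and the careful verification that the block constructed in (iv) genuinely satisfies the \emph{mirrored} Parry conditions at every shift, not just the principal one.
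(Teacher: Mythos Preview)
The paper does not prove this theorem at all. It is stated as a known result and attributed in full to the literature: Glendinning and Sidorov \cite{GlenSid} for $N=1$, and Kong, Li and Dekking \cite{KLD} together with Baker \cite{Baker} for $N\geq 2$. There is therefore no ``paper's own proof'' to compare your proposal against; the author is simply quoting Theorem~\ref{thm:folklore} as background for the subsequent results on $\DD_\infty(a)$.

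That said, your outline is broadly faithful to how the cited references actually proceed: the reduction to the symbolic set $\UU_\beta$, the lexicographic/Parry description via the quasi-greedy expansion $\alpha(\beta)$ of $1$, the monotonicity in $\beta$, the identification of $G(N)$ as the first appearance of a nontrivial admissible sequence, and the role of the generalized Thue--Morse sequence at $\beta_c(N)$ are all standard ingredients in \cite{GlenSid,KLD,Baker,KomLor2}. Two small caveats: your claim that sequences in $\UU_\beta$ for $G(N)<\beta<\beta_c$ are \emph{forced} to be eventually periodic is the right conclusion but hides substantial combinatorics (this is essentially the content of \cite{KLD} for general $N$); and the entropy-zero argument at $\beta=\beta_c$ is more delicate than ``$\tau^{(N)}$ has linear complexity'', since one must control the complexity of the entire subshift $\UU_{\beta_c}$, not merely of the single sequence $\tau^{(N)}$. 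None of this is a gap in your plan so much as a reminder that each bullet is itself a theorem in the cited papers.
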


(In case (ii), there is a further threshold between $G(N)$ and $\beta_c(N)$ that separates finite and infinite cardinalities of $\A_\beta$, but this is not relevant to our present aims.)

Now let $a_\infty^*:=a_\infty^*(N):=1/G(N)$, and $\hat{a}_\infty:=\hat{a}_\infty(N):=1/\beta_c(N)$. For a finite set $S$, let $S^*$ denote the set of all finite sequences of elements of $S$, including the empty sequence.



\begin{theorem} \label{thm:close-correspondence}
\begin{enumerate}[(i)]
\item For all $a\in(\hat{a}_\infty,1)$ and for almost all $a\in(a_{\min},\hat{a}_\infty)$, 
\begin{equation}
\DD_\infty(a)=\left\{\Pi_{2N+1}(v\cdot 2\omega): v\in \{0,1,\dots,2N\}^*, \omega\in\UU_{1/a}\right\},
\label{eq:set-correspondence}
\end{equation}
where $2\omega:=(2\omega_1,2\omega_2,\dots)$ and $v\cdot 2\omega$ denotes the concatenation of $v$ with $2\omega$;
\item For all $a\in(a_{\min},1)$, $\DD_\infty(a)$ is a subset of the set in the right-hand-side of \eqref{eq:set-correspondence}, and the inclusion is proper for infinitely many $a\in(a_{\min},\hat{a}_\infty]$, including $\hat{a}_\infty$ itself;
\item For all $a\in(a_{\min},1)$,
\begin{equation}
\dim_H \DD_\infty(a)=\frac{\log(1/a)}{\log(2N+1)} \dim_H \A_{1/a}.
\label{eq:dimension-correspondence}
\end{equation}
\end{enumerate}
\end{theorem}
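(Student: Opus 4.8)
The plan is to prove Theorem~\ref{thm:close-correspondence} by first establishing the set-theoretic correspondence in parts (i) and (ii), and then deriving the dimension formula in part (iii) as a consequence. The anchor for everything is Theorem~\ref{thm:infinite-derivatives}, which characterizes $\DD_\infty(a)$ via the finiteness of $M(x)$ together with the two limit conditions \eqref{eq:for-right-derivative} and \eqref{eq:for-left-derivative}.

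\medskip
\noindent\textbf{Parts (i) and (ii): the set correspondence.}
First I would translate the two limit conditions into the language of $\beta$-expansions with $\beta=1/a$. Writing $(2N+1)a = (2N+1)/\beta$, the quantity inside \eqref{eq:for-right-derivative} is $1-\Pi_\beta(\sigma^n\omega')$ for the appropriate tail sequence, so the limit condition is a quantitative strengthening of the Parry-type inequality $\Pi_\beta(\sigma^n\omega)<1$ that defines $\UU_\beta$. The condition $M(x)<\infty$ means that only finitely many digits $\xi_i$ are odd; equivalently, after some finite prefix $v\in\{0,1,\dots,2N\}^*$, all remaining digits are even, so the tail has the form $2\omega$ with $\omega\in\Omega$. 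Thus the structural form $\Pi_{2N+1}(v\cdot 2\omega)$ in \eqref{eq:set-correspondence} is forced by $M(x)<\infty$, and the membership $\omega\in\UU_{1/a}$ should correspond to the Parry inequalities. The key reduction is to show that, once the strict inequalities $\Pi_\beta(\sigma^n\omega)<1$ and $\Pi_\beta(\sigma^n\bar\omega)<1$ hold for all $n$, the \emph{geometric prefactor} $((2N+1)a)^n=((2N+1)/\beta)^n$ forces divergence to $+\infty$ precisely when $(2N+1)/\beta>1$, i.e.\ when $\beta<2N+1$, which always holds here. This gives the inclusion ``$\subseteq$'' (part (ii), the subset direction) in general: every point of $\DD_\infty(a)$ has the stated form with $\omega\in\UU_{1/a}$.

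\medskip
\noindent The reverse inclusion, needed for part (i), is the delicate point. Given $\omega\in\UU_{1/a}$, the Parry inequalities are strict, but to recover \emph{divergence} of the weighted sums one needs a uniform lower bound $1-\Pi_\beta(\sigma^n\omega)\geq c\,\gamma^n$ that the geometric factor can overpower. This is exactly where the thresholds $\hat a_\infty$ enter: for $a\in(\hat a_\infty,1)$ (so $\beta>\beta_c$) the univoque set $\A_\beta$ has positive dimension and is robust enough that such bounds hold, while for $a\in(a_{\min},\hat a_\infty)$ one can only guarantee them for \emph{almost all} $a$, and the inclusion can be proper at exceptional values including $\hat a_\infty$ itself (part (ii), the properness claim). \textbf{I expect this quantitative gap estimate} — controlling how fast $1-\Pi_\beta(\sigma^n\omega)$ decays versus the growth rate $(2N+1)/\beta$, uniformly over the relevant range of $a$ — \textbf{to be the main obstacle}, and to be exactly what forces the ``almost all'' qualifier and the exceptional behavior at $\hat a_\infty$.

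\medskip
\noindent\textbf{Part (iii): the dimension formula.}
For the dimension I would work directly with the right-hand side of \eqref{eq:set-correspondence}, which I can use even at the exceptional $a$ because, by general principles, prepending a finite prefix $v$ and taking a countable union over all $v\in\{0,1,\dots,2N\}^*$ does not change Hausdorff dimension. So it suffices to compute $\dim_H$ of the core set $\{\Pi_{2N+1}(2\omega):\omega\in\UU_{1/a}\}$. The map $\omega\mapsto 2\omega\mapsto\Pi_{2N+1}(2\omega)$ sends the digit $\omega_j\in A$ to the base-$(2N+1)$ digit $2\omega_j$, so on the symbolic space this is essentially the identity composed with a rescaling of the base from $1/a$ to $2N+1$. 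Since $\UU_{1/a}=\Pi_{1/a}^{-1}(\A_{1/a})$ and $\Pi_{1/a}$ restricted to $\UU_{1/a}$ is a bijection onto $\A_{1/a}$ (uniqueness of expansions), the two projection maps $\Pi_{2N+1}$ and $\Pi_{1/a}$ differ only in the base. A standard symbolic-dynamics computation — comparing the two Hölder exponents, namely $\log(2N+1)$ versus $\log(1/a)$ per symbol — yields the scaling factor $\log(1/a)/\log(2N+1)$ relating the two dimensions, giving \eqref{eq:dimension-correspondence}. The final step is to invoke Theorem~\ref{thm:folklore} (via the known continuity and explicit values of $\dim_H\A_\beta$) to make the right-hand side fully explicit, though the formula \eqref{eq:dimension-correspondence} itself needs only the change-of-base relation.
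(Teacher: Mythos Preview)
Your plan has the right architecture but contains two genuine errors that would block the argument.

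\textbf{First, the direction of the threshold is inverted.} You write that $a\in(\hat a_\infty,1)$ corresponds to $\beta>\beta_c$ and that the univoque set then ``has positive dimension and is robust enough.'' In fact $\hat a_\infty=1/\beta_c$, so $a>\hat a_\infty$ means $\beta=1/a<\beta_c$, precisely the regime where $\A_\beta$ is countable or empty (Theorem~\ref{thm:folklore}). The paper's mechanism is therefore the opposite of what you suggest: when $\beta<\beta_c$ the set $\UU_\beta$ is so small that every sequence in it automatically satisfies the stronger limsup condition $\limsup_n \Pi_\beta(\sigma^n\omega)<1$ (and similarly for $\bar\omega$); for $\beta>\beta_c$ this can fail, and one must invoke a separate result (from \cite{Allaart2}) that it fails only on a null set of $\beta$'s.

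\textbf{Second, and relatedly, the inclusions are mislabelled and the ``geometric prefactor'' step is false as stated.} The easy inclusion $\DD_\infty(a)\subseteq\{\Pi_{2N+1}(v\cdot 2\omega):\omega\in\UU_{1/a}\}$ comes simply from the observation that divergence to $+\infty$ in \eqref{eq:for-right-derivative}--\eqref{eq:for-left-derivative} forces eventual positivity, hence the Parry inequalities after absorbing finitely many digits into $v$. For the reverse inclusion you assert that strict inequalities plus the factor $((2N+1)a)^n$ force divergence; this is not true---if $1-\Pi_\beta(\sigma^n\omega)$ decays faster than $((2N+1)a)^{-n}$ the product need not diverge. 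What the paper does is introduce the intermediate set
\[
\widetilde{\UU}_\beta=\Big\{\omega\in\UU_\beta:\ \limsup_n\Pi_\beta(\sigma^n\omega)<1\ \text{and}\ \limsup_n\Pi_\beta(\sigma^n\bar\omega)<1\Big\},
\]
for which the limsup bound gives a uniform $c>0$ with $1-\Pi_\beta(\sigma^n\omega)\ge c$, and \emph{then} the geometric factor yields divergence. This produces the sandwich
\[
\bigcup_{n,k}\psi_{n,k}\big(\Phi(\widetilde{\UU}_{1/a})\big)\ \subset\ \DD_\infty(a)\ \subset\ \bigcup_{n,k}\psi_{n,k}\big(\Phi(\UU_{1/a})\big),
\]
and the equality in part (i) then follows from the external fact $\widetilde{\UU}_\beta=\UU_\beta$ for all $\beta<\beta_c$ and almost all $\beta>\beta_c$. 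The properness claim in (ii) likewise requires producing, for specific $\beta$ including $\beta_c$, sequences $\omega\in\UU_\beta\setminus\widetilde{\UU}_\beta$ for which $1-\Pi_\beta(\sigma^n\omega)$ decays fast enough to kill the limit; this too is imported from \cite{Allaart2}.

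\textbf{For part (iii)} your symbolic change-of-base argument is exactly what the paper does (via the metrics $\rho_\beta$ and the bi-Lipschitz property of $\Pi_\beta$ on $\UU_\beta$). But note that at the exceptional $a$ you only have the one-sided inclusion, so computing $\dim_H$ of the right-hand side of \eqref{eq:set-correspondence} gives only an upper bound for $\dim_H\DD_\infty(a)$. The matching lower bound comes from the left side of the sandwich together with the (again external) fact that $\Dim_H^{(\beta)}\widetilde{\UU}_\beta=\Dim_H^{(\beta)}\UU_\beta$ for every $\beta$. Your write-up should make this two-sided estimate explicit.
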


Theorem \ref{thm:close-correspondence}(i) says that for almost all $a$, the set $\DD_\infty(a)$ consists precisely of those points whose base $2N+1$ expansion is obtained by taking an arbitrary point $x$ having a unique expansion in base $\beta$ (where $\beta=1/a$), doubling the base $\beta$ digits of $x$, and appending the resulting sequence to an arbitrary finite prefix of digits from $\{0,1,\dots,2N\}$.

\begin{corollary} \label{thm:size-of-Dinf}
The set $\DD_\infty(a)$ is:
\begin{enumerate}[(i)]
\item empty if $a\geq a_\infty^*$;
\item countably infinite, containing only rational points, if $\hat{a}_\infty<a<a_\infty^*$;
\item uncountable with Hausdorff dimension zero if $a=\hat{a}_\infty$;
\item of strictly positive Hausdorff dimension if $a_{\min}<a<\hat{a}_\infty$.
\end{enumerate}
Moreover, on the interval $(a_{\min},\hat{a}_\infty)$, the function $a\mapsto \dim_H\DD_\infty(a)$ is continuous and nonincreasing, and its points of decrease form a set of Lebesgue measure zero. 
\end{corollary}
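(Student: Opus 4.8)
The plan is to derive all four cardinality/dimension statements, and the final monotonicity assertion, from Theorem \ref{thm:folklore} and Theorem \ref{thm:close-correspondence}, using the dictionary $\hat{a}_\infty=1/\beta_c(N)$ and $a_\infty^*=1/G(N)$ (so that $a\ge \hat{a}_\infty$ corresponds to $\beta=1/a\le\beta_c$, and $a\ge a_\infty^*$ to $\beta\le G(N)$). Three items are then essentially immediate. For (i), if $a\ge a_\infty^*$ then $\beta\le G(N)$, so $\A_\beta=\emptyset$ by Theorem \ref{thm:folklore}(i), hence $\UU_\beta=\Pi_\beta^{-1}(\A_\beta)=\emptyset$, and the inclusion of Theorem \ref{thm:close-correspondence}(ii) forces $\DD_\infty(a)=\emptyset$. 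For (iv), if $a_{\min}<a<\hat{a}_\infty$ then $\beta>\beta_c$, so $\dim_H\A_\beta>0$ by Theorem \ref{thm:folklore}(iv) and \eqref{eq:dimension-correspondence} gives $\dim_H\DD_\infty(a)>0$. The dimension-zero half of (iii) is identical: at $a=\hat{a}_\infty$ we have $\beta=\beta_c$, $\dim_H\A_{\beta_c}=0$ by Theorem \ref{thm:folklore}(iii), and \eqref{eq:dimension-correspondence} yields $\dim_H\DD_\infty(\hat{a}_\infty)=0$.

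For (ii), fix $\hat{a}_\infty<a<a_\infty^*$, so $G(N)<\beta<\beta_c$ and $\A_\beta$ is nonempty and countable (Theorem \ref{thm:folklore}(ii)). As every point of $\A_\beta$ has a unique expansion, $\Pi_\beta$ restricts to a bijection $\UU_\beta\to\A_\beta$, so $\UU_\beta$ is countable and nonempty. Since $a>\hat{a}_\infty$, equality \eqref{eq:set-correspondence} holds, and its right-hand side is the image of the countable set $\{0,\dots,2N\}^*\times\UU_\beta$, hence countable; it is infinite because fixing any $\omega\in\UU_\beta$ and taking $v=0^k$ produces the distinct points $(2N+1)^{-k}\Pi_{2N+1}(2\omega)$, $k\ge0$ (here $\Pi_{2N+1}(2\omega)>0$ since $\A_\beta\subset((N-\beta+1)/(\beta-1),1)$ with $\beta<N+1$). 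For rationality I would invoke the known fact that for $\beta<\beta_c$ every univoque sequence is eventually periodic; then each $v\cdot 2\omega$ is eventually periodic as a base-$(2N+1)$ string (doubling digits and prepending a finite word preserve eventual periodicity), so each $\Pi_{2N+1}(v\cdot 2\omega)$ is rational.

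The main obstacle is the uncountability in (iii), because at $a=\hat{a}_\infty$ equality \eqref{eq:set-correspondence} fails (Theorem \ref{thm:close-correspondence}(ii)), so the inclusion only gives an upper bound and a direct construction is needed. I would argue through Theorem \ref{thm:infinite-derivatives}. Taking $x=\Pi_{2N+1}(2\omega)$ with $\omega\in A^\NN$ makes every digit $\xi_i=2\omega_i$ even, so $M(x)=0<\infty$, and conditions \eqref{eq:for-right-derivative}--\eqref{eq:for-left-derivative} reduce to
\[
\big((2N+1)/\beta_c\big)^n\big(1-\Pi_{\beta_c}(\sigma^n\omega)\big)\to\infty \quad\text{and}\quad \big((2N+1)/\beta_c\big)^n\big(1-\Pi_{\beta_c}(\sigma^n\bar\omega)\big)\to\infty.
\]
Since $\beta_c<N+1<2N+1$, the base $(2N+1)/\beta_c$ exceeds $1$, so it suffices to exhibit uncountably many $\omega$ for which $1-\Pi_{\beta_c}(\sigma^n\omega)$ and $1-\Pi_{\beta_c}(\sigma^n\bar\omega)$ decay strictly more slowly than $(\beta_c/(2N+1))^n$. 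A clean sufficient condition is $\sup_n\Pi_{\beta_c}(\sigma^n\omega)<1$ and $\sup_n\Pi_{\beta_c}(\sigma^n\bar\omega)<1$; note that the two tail sums add to the constant $N/(\beta_c-1)$, and since $\beta_c>G(N)\ge 1+N/2$ one has $N/(\beta_c-1)<2$, so there is genuine room for both to stay below $1$. Realizing uncountably many such $\omega$ is the technical heart: for large $N$ this can be done with digits confined to a subinterval of $A$, but in general (and for $N=1$) the digits must be arranged in blocks, the natural candidate being the Thue-Morse subshift underlying $\beta_c$, which is uncountable, consists of univoque sequences, and whose agreement with the critical sequence $\tau^{(N)}$ grows only sub-linearly, so that $1-\Pi_{\beta_c}(\sigma^n\cdot)$ decays sub-exponentially and is dominated by $((2N+1)/\beta_c)^n$. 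Verifying this rate estimate via the combinatorics of $\tau^{(N)}$ is where the real work lies.

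Finally, for the monotonicity statement I would rewrite \eqref{eq:dimension-correspondence} using the standard identity $\dim_H\A_\beta=H(\beta)/\log\beta$, where $H(\beta)$ is the topological entropy of the univoque subshift $\UU_\beta$. The factor $\log(1/a)=\log\beta$ then cancels, giving $\dim_H\DD_\infty(a)=H(1/a)/\log(2N+1)$ on $(a_{\min},\hat{a}_\infty)$. This cancellation is the crux: whereas $\dim_H\A_\beta$ is merely continuous, the entropy $H(\beta)$ is, by \cite{KKL,KongLi}, a devil's staircase---continuous, nondecreasing in $\beta$, and constant on an open dense union of plateaus of full Lebesgue measure. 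Composing with the decreasing diffeomorphism $a\mapsto 1/a$, which preserves null and full-measure sets, shows that $a\mapsto\dim_H\DD_\infty(a)$ is continuous, nonincreasing, and locally constant off a Lebesgue-null set; that is, its points of decrease form a set of measure zero, as claimed.
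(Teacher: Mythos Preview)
Your argument for (i), (ii), (iv), the dimension-zero half of (iii), and the rationality clause in (ii) matches the paper's proof essentially line for line: the paper also deduces these directly from Theorems~\ref{thm:folklore} and~\ref{thm:close-correspondence}, invoking the eventual periodicity of univoque sequences below $\beta_c$ (from \cite{GlenSid,KLD}) for the rationality.

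For the ``moreover'' statement you go further than the paper, which simply cites \cite[Theorem~2.6]{KongLi}. Your entropy cancellation---rewriting \eqref{eq:dimension-correspondence} as $\dim_H\DD_\infty(a)=H(1/a)/\log(2N+1)$ via $\dim_H\A_\beta=H(\beta)/\log\beta$---is exactly the mechanism behind that citation, and it is worth making explicit: it is what turns the merely continuous function $\beta\mapsto\dim_H\A_\beta$ into the genuinely monotone devil's staircase $\beta\mapsto H(\beta)$, so that monotonicity of $\dim_H\DD_\infty(a)$ follows.

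The one substantive divergence is the uncountability in (iii). The paper does not argue this internally; it invokes Theorem~\ref{thm:infinite-derivatives} together with \cite[Theorem~1.3]{Allaart2}, a result from a companion paper written precisely to supply uncountably many $\omega\in\UU_{\beta_c}$ satisfying the rate conditions \eqref{eq:for-right-derivative}--\eqref{eq:for-left-derivative}. Your proposed route---working inside the Thue-Morse subshift and arguing that tail agreement with $\tau^{(N)}$ grows slowly enough that $1-\Pi_{\beta_c}(\sigma^n\omega)$ decays sub-exponentially---is along the right lines and is indeed close in spirit to what that companion paper does, but as you acknowledge, the combinatorial verification is nontrivial: one must show that uncountably many subshift elements have first disagreement with $\tau^{(N)}$ at position $o(n)$ uniformly over all shifts $\sigma^n$, for both $\omega$ and $\bar\omega$. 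Your ``clean sufficient condition'' $\sup_n\Pi_{\beta_c}(\sigma^n\omega)<1$ is the set $\widetilde{\UU}_{\beta_c}$ of \eqref{eq:alternative-representation-of-U-tilde}, and establishing that \emph{this} set is already uncountable is one natural way to finish; that, too, is handled in \cite{Allaart2}.
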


Regarding the relative ordering and the asymptotics of the five thresholds in Table \ref{tab:thresholds}, we have the following:

\begin{proposition} \label{prop:thresholds}
\begin{enumerate}[(i)]
\item For each $N\geq 5$, we have
\begin{equation}
a_{\min}(N)<\tilde{a}_0(N)<a_0^*(N)<\hat{a}_\infty(N)<a_\infty^*(N). 
\end{equation}
\item As $N\to\infty$, we have
\begin{gather*}
Na_{\min}(N)\to 1, \qquad N\tilde{a}_0(N)\to \frac{1+\sqrt{2}}{2}=1.207\cdots,\\
Na_0^*(N)\to \frac{3}{2}, \qquad N\hat{a}_\infty(N)\to 2, \qquad Na_\infty^*(N)\to 2.
\end{gather*}
\end{enumerate}
\end{proposition}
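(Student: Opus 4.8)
\emph{Proof proposal.} The plan is to organize the argument around the five thresholds, separating the three ``algebraic'' ones ($a_{\min}$, $a_0^*$, $a_\infty^*$) from the two defined only implicitly ($\tilde a_0$ and $\hat a_\infty = 1/\beta_c$). For the algebraic thresholds the asymptotics in (ii) are immediate: $Na_{\min} = N/(N+1)\to 1$ and $Na_0^* = N(3N+1)/[(N+1)(2N+1)]\to 3/2$ directly, while substituting $G(N)=m+1$ (for $N=2m$) and $G(N)=(m+\sqrt{m^2+4m})/2$ (for $N=2m-1$) gives $G(N)=N/2+O(1)$, hence $Na_\infty^* = N/G(N)\to 2$. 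Several inequalities of (i) come for free as well: $a_{\min}<\tilde a_0$ is part of the definition of $\tilde a_0$, and $\hat a_\infty<a_\infty^*$ is precisely the known strict inequality $G(N)<\beta_c(N)$ underlying Theorem~\ref{thm:folklore}(ii) (alternatively, evaluating $\Pi_\beta(\tau^{(N)})$ at $\beta=G(N)$ gives a value $>1$, which forces $\beta_c>G(N)$ since $\beta\mapsto\Pi_\beta(\tau^{(N)})$ is strictly decreasing).

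For $\tilde a_0$ I would use that $g_N(a):=(2N+1)^{2N+1}a^{N+1}((N+1)a-1)^N$ is strictly increasing on $(a_{\min},\infty)$, so comparing $\tilde a_0$ with a candidate $a$ reduces to comparing $g_N(a)$ with $N^N$. The inequality $\tilde a_0<a_0^*$ then follows from an exact identity: since $(N+1)a_0^*-1 = N/(2N+1)$, the factor $(2N+1)^{2N+1}$ cancels and $g_N(a_0^*)=\big((3N+1)/(N+1)\big)^{N+1}N^N>N^N$ for every $N$. For the asymptotic $N\tilde a_0\to(1+\sqrt2)/2$ I would set $a=t/N$ and expand $\log g_N(t/N)-N\log N$; the $\log N$ terms cancel and the $O(N)$ part is $N\log\big(4t(t-1)\big)$, so the leading balance forces $4t(t-1)=1$, i.e. $t=(1+\sqrt2)/2$. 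Rigorously one checks $\log g_N(t/N)-N\log N\to-\infty$ for fixed $t<(1+\sqrt2)/2$ and $\to+\infty$ for $t>(1+\sqrt2)/2$, which together with the monotonicity of $g_N$ squeezes $N\tilde a_0$ to the claimed limit.

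The crux is the threshold $\hat a_\infty=1/\beta_c$, since $\beta_c$ admits no closed form. Here I would sandwich $\beta_c$ using only the range of the generalized Thue--Morse digits: $\tau_i^{(N)}\in\{m-1,m\}$ when $N=2m-1$ and $\tau_i^{(N)}\in\{m-1,m,m+1\}$ when $N=2m$. Feeding these into the defining relation $1=\sum_i\tau_i^{(N)}\beta_c^{-i}$ and using the geometric bounds $(\min_i\tau_i^{(N)})/(\beta_c-1)\le 1\le(\max_i\tau_i^{(N)})/(\beta_c-1)$ gives $\beta_c\in[m,m+1]$ (odd) and $\beta_c\in[m,m+2]$ (even); since $m\sim N/2$, the ratio $N/\beta_c$ is squeezed to $2$, proving $N\hat a_\infty\to2$. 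The same upper bounds drive the last and most delicate inequality of (i), namely $a_0^*<\hat a_\infty$, equivalently $\beta_c<1/a_0^*=(N+1)(2N+1)/(3N+1)$: the odd bound $\beta_c\le(N+3)/2$ reduces it to $N^2-4N-1>0$ (true for $N\ge5$), and the even bound $\beta_c\le(N+4)/2$ reduces it to $N^2-7N-2>0$ (true for $N\ge8$).

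This leaves only the single even value $N=6$, and this is the step I expect to be the main obstacle, because there the margin is genuinely small: $1/a_0^*(6)=91/19\approx4.79$ while $\beta_c(6)\approx4.75$. I would close this gap by sharpening the crude bound using the first three exact digits $4,3,2$ of $\tau^{(6)}$: estimating $\sum_i\tau_i^{(6)}B^{-i}\le 4B^{-1}+3B^{-2}+2B^{-3}+4B^{-3}/(B-1)$ at $B=91/19$ yields a value strictly below $1$, and since $\beta\mapsto\sum_i\tau_i^{(6)}\beta^{-i}$ is decreasing this forces $\beta_c(6)<91/19$. The transcendence of $\beta_c$ is exactly what makes $a_0^*<\hat a_\infty$ delicate: it is the only comparison where no closed form is available, the slack for the borderline values $N=5,6,7$ is thin, and one must decide precisely how many Thue--Morse digits to retain in each residual case. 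Every other step is definitional, elementary algebra, or a routine monotonicity-and-expansion squeeze.
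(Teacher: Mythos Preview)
Your proposal is correct and follows the paper's strategy in all essentials: the same monotonicity argument for $g_N$, the same digit-range bounds $\tau_i^{(2m-1)}\le m$ and $\tau_i^{(2m)}\le m+1$ to bound $\beta_c$, and the same isolation of $N=6$ as the residual case requiring a finer estimate. Two minor but genuine differences are worth noting. First, for $\tilde a_0<a_0^*$ the paper passes through the intermediate value $c_0/N$ (showing $\tilde a_0<c_0/N<a_0^*$, the second step needing $N\ge 5$), whereas your direct evaluation $g_N(a_0^*)=\big((3N+1)/(N+1)\big)^{N+1}N^N>N^N$ works uniformly for all $N$ and is cleaner. Second, for $N\hat a_\infty\to 2$ the paper simply cites Baker, while your two-sided geometric sandwich $\beta_c\in[m,m+2]$ is self-contained; likewise, where the paper says only ``a direct calculation shows that $a_0^*(6)<\hat a_\infty(6)$'', your three-digit Thue--Morse estimate makes that calculation explicit and checkable.
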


It is interesting to observe that for $N=1$, there is an interval of $a$-values (namely, $a_\infty^*<a<a_0^*$) for which $\dim_H \DD_0(a)>0$ but $\DD_\infty(a)=\emptyset$. In other words, for such $a$ there are uncountably many points where $F_{N,a}$ is differentiable, but no points where it has an infinite derivative. For $2\leq N\leq 4$ there is no such $a$, but there is still an interval (namely, $\hat{a}_\infty<a<a_0^*$) for which $\dim_H \DD_0(a)>0$ but $\DD_\infty(a)$ is only countable. For all $N\geq 5$, however, $\dim_H \DD_\infty(a)>0$ whenever $\dim_H \DD_0(a)>0$.

\section{Proofs of Theorems \ref{thm:differentiability} and \ref{thm:Hdim}} \label{sec:finite-derivatives}

Recall that for $x\in[0,1)$, $x=0.\xi_1\xi_2\xi_3\cdots$ denotes the expansion of $x$ in base $2N+1$. We first introduce some additional notation. For $n\in\NN$, let $i(n):=i(n;x)$ denote the number of odd digits among $\xi_1,\dots,\xi_n$. Let $x_{n,j}:=j/(2N+1)^n$, and put $I_{n,j}:=[x_{n,j},x_{n,j+1})$ for $n\in\NN\cup\{0\}$ and $j\in\ZZ$. For $x\in(0,1)$ and $n\in\NN\cup\{0\}$, let $I_n(x)$ denote that interval $I_{n,j}$ which contains $x$.

The first important observation is that
\begin{equation}
F_{N,a}(x_{n,j})=f_n(x_{n,j}), \qquad \mbox{for $j=0,1,\dots,(2N+1)^n$}.
\label{eq:fixed-forever}
\end{equation}

Next, recall that $b$ is the number such that $(N+1)a-Nb=1$. The recursive construction of the piecewise linear approximants $f_n$ implies that
\begin{equation}
f_n'(x)=(2N+1)^n a^{n-i(n)}(-b)^{i(n)},
\label{eq:slopes}
\end{equation}
at all $x$ not of the form $x_{n,j}$, $j\in\ZZ$. As a result,
\begin{equation*}
\frac{f_{n+1}'(x)}{f_n'(x)}\in \{(2N+1)a,-(2N+1)b\},
\end{equation*}
and since neither of these two values equals $1$, $f_n'(x)$ cannot converge to a nonzero finite number. Clearly, if $F_{N,a}'(x)$ exists, it must be equal to $\lim_{n\to\infty}f_n'(x)$ in view of \eqref{eq:fixed-forever}. The only possible finite value of $F_{N,a}'(x)$, therefore, is zero.

\begin{lemma} \label{lem:zero-derivative}
For $x\in(0,1)$, $F_{N,a}'(x)=0$ if and only if $\lim_{n\to\infty}f_n'(x)=0$.
\end{lemma}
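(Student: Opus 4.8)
The plan is to prove the lemma by establishing both directions of the equivalence, with the forward direction being essentially free and the reverse direction carrying all the content. First I would note the easy implication: if $F_{N,a}'(x)=0$, then since $x_{n,j(n)}\to x$ where $j(n)$ is chosen so that $x\in I_n(x)=[x_{n,j(n)},x_{n,j(n)+1})$, the difference quotients over these dyadic-type endpoints must tend to $0$. Using \eqref{eq:fixed-forever}, the slope $f_n'(x)$ is exactly the slope of the chord of $F_{N,a}$ joining the endpoints of $I_n(x)$, namely
\begin{equation*}
f_n'(x)=\frac{F_{N,a}(x_{n,j(n)+1})-F_{N,a}(x_{n,j(n)})}{x_{n,j(n)+1}-x_{n,j(n)}}.
\end{equation*}
Since both endpoints converge to $x$ and $F_{N,a}$ is differentiable there with derivative $0$, a standard chord-slope argument forces $f_n'(x)\to 0$.

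For the reverse direction, which is the substantive part, I would assume $\lim_{n\to\infty}f_n'(x)=0$ and aim to show that for every sequence $t_k\to x$, the quotient $\big(F_{N,a}(t_k)-F_{N,a}(x)\big)/(t_k-x)\to 0$. The natural approach is a \emph{sandwiching} argument: for each $t$ near $x$, locate the largest $n$ such that $t$ and $x$ lie in a common interval $I_n$ of the $n$-th level partition, or more precisely compare $t$ to the nested endpoints $x_{n,j(n)}$ and $x_{n,j(n)+1}$ bracketing $x$. Because $F_{N,a}$ agrees with $f_n$ at all the partition points and is itself self-affine, one can control $F_{N,a}(t)-F_{N,a}(x)$ in terms of the slope $f_n'(x)$ on the relevant interval together with the total oscillation of $F_{N,a}$ over a single sub-interval $I_n$. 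The key quantitative input is \eqref{eq:slopes}, giving $f_n'(x)=(2N+1)^n a^{n-i(n)}(-b)^{i(n)}$, so that the oscillation of $F_{N,a}$ over $I_n(x)$ is comparable to $|f_n'(x)|\cdot(2N+1)^{-n}$, which tends to $0$ under our hypothesis.

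The main obstacle, and where care is needed, is that $x$ need not be comparable to the \emph{interior} of a single $I_n$ uniformly; a generic increment $t-x$ may straddle a partition point, so one cannot simply read off the derivative as a single chord slope. I would handle this by splitting according to whether $t>x$ or $t<x$ and, within each case, comparing $t$ to the nearest endpoint $x_{n,j}$ at the appropriate scale $n=n(t)$ chosen so that $|t-x|\asymp (2N+1)^{-n}$. The difference quotient is then bounded by a constant times $\max\{|f_m'(x)|:m\ge n(t)\}$ together with the maximal oscillation of $F_{N,a}$ on intervals of level $n(t)$ adjacent to $x$; both quantities vanish as $t\to x$ precisely because $f_n'(x)\to 0$ and because, by \eqref{eq:slopes}, consecutive slope ratios lie in $\{(2N+1)a,-(2N+1)b\}$, so once $|f_n'(x)|$ is small it cannot subsequently blow up by more than a bounded factor before decreasing again. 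Assembling these bounds gives the desired two-sided estimate and hence $F_{N,a}'(x)=0$.
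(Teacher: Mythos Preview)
Your overall architecture matches the paper's: the ``only if'' direction is disposed of by the chord-slope identity coming from \eqref{eq:fixed-forever}, and for the ``if'' direction you pick the scale $n=n(t)$ with $|t-x|\asymp(2N+1)^{-n}$ and try to bound $|F(t)-F(x)|$ by the oscillation of $F$ over $I_n(x)$ and the \emph{adjacent} interval at the same level. That is exactly what the paper does.

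The gap is in how you control the adjacent interval. The oscillation of $F$ over $I_{n,j+1}$ is $|s_{n,j+1}|\,(2N+1)^{-n}$, and the whole point is that you need $|s_{n,j+1}|$ to be comparable to $|s_{n,j}|=|f_n'(x)|$. The paper supplies this via the separate observation
\[
\frac{s_{n,j+1}}{s_{n,j}}\in\left\{-\frac{a}{b},\,-\frac{b}{a}\right\},
\]
proved by a short induction on $n$; this is what yields the constant $C=\max\{a/b,b/a\}$ in the final estimate. You never state this fact. Instead you invoke \eqref{eq:slopes} and the ratios $\{(2N+1)a,\,-(2N+1)b\}$, but those are the \emph{cross-level} ratios $f_{n+1}'(x)/f_n'(x)$ at the fixed point $x$; they say nothing about the slope on the neighbouring interval $I_{n,j\pm1}$ at the same level $n$. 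Your concluding sentence, ``once $|f_n'(x)|$ is small it cannot subsequently blow up by more than a bounded factor before decreasing again,'' is moreover false as stated: since $(2N+1)a>1$, a long run of even digits makes $|f_m'(x)|$ grow by an arbitrarily large factor, even when the full sequence tends to $0$. (That observation is also irrelevant to bounding the adjacent-interval slope.) Once you insert the correct same-level comparison $|s_{n,j\pm1}|\le C\,|f_n'(x)|$, the sandwiching argument you outline goes through cleanly and coincides with the paper's proof.
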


\begin{proof}
Only the ``if" part requires proof. For simplicity, write $F:=F_{N,a}$. Let $s_{n,j}$ denote the slope of $f_n$ on the interval $I_{n,j}$. An easy induction argument shows that
\begin{equation}
\frac{s_{n,j+1}}{s_{n,j}}\in\left\{-\frac{a}{b},-\frac{b}{a}\right\}, \qquad j=0,1,\dots,(2N+1)^n-2.
\label{eq:consecutive-slope-ratios}
\end{equation}
Furthermore,
\begin{equation}
x\in I_{n,j} \quad \Rightarrow \quad \min\{F(x_{n,j}),F(x_{n,j+1})\}\leq F(x) \leq \max\{F(x_{n,j}),F(x_{n,j+1})\}.
\label{eq:stay-in-box}
\end{equation}
Now assume $f_n'(x)\to 0$. Fix $h>0$, let $n$ be the integer such that $(2N+1)^{-n-1}<h\leq (2N+1)^{-n}$, and let $j$ be such that $I_{n,j}=I_n(x)$. Then $x_{n,j}\leq x<x_{n,j+1}$ and $x_{n,j}\leq x+h<x_{n,j+2}$. If $x+h>x_{n,j+1}$, \eqref{eq:stay-in-box} gives
\begin{align*}
|F(x+h)-F(x)|&\leq |F(x+h)-F(x_{n,j+1})|+|F(x_{n,j+1})-F(x)|\\
&\leq |F(x_{n,j+2})-F(x_{n,j+1})|+|F(x_{n,j+1})-F(x_{n,j})|\\
&=(2N+1)^{-n}\left(|s_{n,j+1}|+|s_{n,j}|\right)\\
&\leq (1+C)(2N+1)^{-n}|s_{n,j}|\\
&=(1+C)(2N+1)^{-n}|f_n'(x)|,
\end{align*}
where $C:=\max\{a/b,b/a\}$, and the last inequality follows from \eqref{eq:consecutive-slope-ratios}. If $x+h\leq x_{n,j+1}$, the same bound follows even more directly. Thus, we obtain the estimate
\begin{equation*}
\left|\frac{F(x+h)-F(x)}{h}\right|\leq (2N+1)(1+C)|f_n'(x)|,
\end{equation*}
showing that $F$ has a vanishing right derivative at $x$. By a similar argument, $F$ has a vanishing left derivative at $x$ as well, and hence, $F'(x)=0$. 
\end{proof}

Now define
\begin{equation*}
l(x):=\liminf_{n\to\infty} \frac{i(n;x)}{n}, \qquad x\in (0,1),
\end{equation*}
and use \eqref{eq:slopes} to write
\begin{equation*}
|f_n'(x)|=\left[(2N+1)a\left(\frac{b}{a}\right)^{i(n)/n}\right]^n.
\end{equation*}
The significance of the function $\phi_N(a)$ is that
\begin{equation*}
(2N+1)a\left(\frac{b}{a}\right)^{\phi_N(a)}=1.
\end{equation*}
Since $0<b/a<1$, this last equation together with Lemma \ref{lem:zero-derivative} implies
\begin{equation}
\left\{x\in(0,1): l(x)>\phi_N(a)\right\} \subseteq \DD_0(a) \subseteq 
\left\{x\in(0,1): l(x)\geq\phi_N(a)\right\}.
\label{eq:frequency-sandwich}
\end{equation}

\begin{proof}[Proof of Theorem \ref{thm:differentiability}]
(i) Assume first that $a\geq a_0^*$. Since $a>b>0$, \eqref{eq:slopes} yields
\begin{equation*}
|f_n'(x)|\geq \big((2N+1)b\big)^n=\left[\frac{2N+1}{N}\big((N+1)a-1\big)\right]^n\geq 1
\end{equation*}
for every $x$ not of the form $k/(2N+1)^n$, so $f_n'(x)\not\to 0$ for such $x$. Hence, $F_{N,a}$ is nowhere differentiable.

(ii) and (iii): By Borel's normal number theorem,
\begin{equation}
\lim_{n\to\infty} \frac{i(n;x)}{n}=\frac{N}{2N+1} \qquad\mbox{for almost every $x\in(0,1)$}.
\label{eq:normal-frequency-of-odd}
\end{equation}
By definition of $\tilde{a}_0$, we have $\phi_N(\tilde{a}_0)=N/(2N+1)$. Moreover, $\phi_N$ is monotone increasing on $(a_{\min},a_0^*)$. From these observations, it follows via \eqref{eq:frequency-sandwich} and \eqref{eq:normal-frequency-of-odd} that $\DD_0(a)$ has Lebesgue measure one if $a_{\min}<a<\tilde{a}_0$, and Lebesgue measure zero if $a>\tilde{a}_0$.
Finally, the law of the iterated logarithm implies that for almost every $x\in(0,1)$, $i(n;x)/n<N/(2N+1)=\phi_N(\tilde{a}_0)$, and therefore $|f_n'(x)|\geq 1$, for infinitely many $n$. (See \cite{Kobayashi} for more details in the case $N=1$). Thus, $\DD_0(\tilde{a}_0)$ has measure zero as well. The remaining statements follow from Theorem \ref{thm:Hdim}, which is proved below.
\end{proof}

In view of the relations \eqref{eq:frequency-sandwich}, we define for $p\in(0,1)$ the sets
\begin{gather*}
R_p:=\{x\in(0,1): l(x)>p\}, \qquad \bar{R}_p:=\{x\in(0,1): l(x)\geq p\},\\
S_p:=\{x\in(0,1): l(x)<p\}, \qquad \bar{S}_p:=\{x\in(0,1): l(x)\leq p\}.
\end{gather*}

\begin{lemma} \label{lem:dimensions}
We have
\begin{equation}
\dim_H R_p=\dim_H \bar{R}_p=\begin{cases}
1 & \mbox{if $0<p\leq N/(2N+1)$},\\
h_N(p) & \mbox{if $N/(2N+1)\leq p<1$},
\end{cases}
\label{eq:large-p-dimension}
\end{equation}
and
\begin{equation}
\dim_H S_p=\dim_H \bar{S}_p=\begin{cases}
h_N(p) & \mbox{if $0<p\leq N/(2N+1)$},\\
1 & \mbox{if $N/(2N+1)\leq p<1$}.
\end{cases}
\label{eq:small-p-dimension}
\end{equation}
\end{lemma}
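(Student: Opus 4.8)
The plan is to recognize Lemma~\ref{lem:dimensions} as a statement about the Hausdorff dimension of digit-frequency deviation sets, of Eggleston--Besicovitch type, and to prove it by combining a large-deviation count of cylinders with the mass distribution principle. The starting point is that $h_N(p)\log(2N+1)$ is exactly the entropy of the Bernoulli measure $\mu_p$ on $\{0,1,\dots,2N\}^\NN$ assigning probability $p/N$ to each of the $N$ odd digits and $(1-p)/(N+1)$ to each of the $N+1$ even digits. The combinatorial input is that the number of base-$(2N+1)$ words of length $n$ with exactly $k$ odd digits equals $\binom{n}{k}N^k(N+1)^{n-k}$, and Stirling's formula gives $\frac1n\log_{2N+1}\!\big[\binom{n}{k}N^k(N+1)^{n-k}\big]\to h_N(q)$ whenever $k/n\to q$. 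I would also record that $h_N$ is continuous on $(0,1)$, strictly increasing on $(0,N/(2N+1)]$ and strictly decreasing on $[N/(2N+1),1)$, with maximum $h_N(N/(2N+1))=1$; this monotonicity is what pins the extremal frequency to the threshold $p$.

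For the lower bounds I would use the measures $\mu_q$ together with Billingsley's lemma (see, e.g., Falconer \cite{Falconer}). By the strong law of large numbers, $\mu_q$-almost every $x$ has $i(n;x)/n\to q$, hence $l(x)=q$, and moreover
\begin{equation*}
\frac{\log\mu_q(I_n(x))}{-n\log(2N+1)}\longrightarrow h_N(q)\qquad\text{for }\mu_q\text{-a.e. }x.
\end{equation*}
The mass distribution principle then yields $\dim_H E\ge h_N(q)$ for any $E$ carrying full $\mu_q$-measure. Choosing $q=p$ places $\mu_p$-typical points in $\bar R_p$ and $\bar S_p$, giving dimension at least $h_N(p)$; choosing $q$ slightly larger (resp.\ smaller) than $p$ and letting $q\to p$ handles the strict sets $R_p$ (resp.\ $S_p$) by continuity of $h_N$. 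On the ``typical'' side of the threshold one takes $q=N/(2N+1)$ and obtains dimension $1$.

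For the upper bounds I would use covering arguments driven by the same cylinder count. When $p\ge N/(2N+1)$, the condition $l(x)\ge p$ forces, for each $\eps>0$, that $i(n;x)\ge(p-\eps)n$ for all large $n$; writing $\bar R_p\subseteq\bigcup_M\{x:i(n;x)\ge(p-\eps)n\ \forall n\ge M\}$ and covering each member by the level-$n$ cylinders meeting it, the count $\sum_{k\ge(p-\eps)n}\binom{n}{k}N^k(N+1)^{n-k}$ is dominated, by monotonicity of $h_N$, by its boundary term $(2N+1)^{n\,h_N(p-\eps)+o(n)}$, so $\dim_H\bar R_p\le h_N(p-\eps)\to h_N(p)$. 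The case $p\le N/(2N+1)$ for $\bar S_p$ is the only genuinely different one: here $l(x)\le p$ only guarantees $i(n;x)\le(p+\eps)n$ \emph{infinitely often}, so $\bar S_p\subseteq\bigcup_{n\ge M}\{x:i(n;x)\le(p+\eps)n\}$ for every $M$, and one must sum the covering content over all $n\ge M$ rather than at a single scale. This still works because, for $p+\eps<N/(2N+1)$, the per-scale contribution is $(2N+1)^{n(h_N(p+\eps)-s)}$ with $s>h_N(p+\eps)$, so the geometric series converges and tends to $0$ as $M\to\infty$. On the complementary side the dimension is $1$: for $p$ strictly beyond the threshold the set has full Lebesgue measure by Borel's theorem \eqref{eq:normal-frequency-of-odd}, and the boundary value $p=N/(2N+1)$ is covered by the $q\to p$ lower bound against the trivial upper bound $1$.

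The main obstacle is precisely this ``wrong-direction'' upper bound for the infinitely-often set $\bar S_p$, together with confirming that the strict and non-strict sets share the same dimension: one must check that passing to a $\limsup$-type condition does not inflate the dimension, which hinges on the geometric summability just described and on the monotonicity of $h_N$. Everything else reduces to bookkeeping with the binomial estimate, the continuity of $h_N$, and the countable stability of Hausdorff dimension.
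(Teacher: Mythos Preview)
Your argument is correct, and the lower-bound half is essentially the paper's: both rely on the Eggleston/Billingsley computation with the Bernoulli measure $\mu_q$, and both pass from the strict to the non-strict sets via the sandwich $\bar R_{p+\eps}\subseteq R_p\subseteq \bar R_p$ and the continuity of $h_N$.

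The genuine difference is in the upper bound. The paper does \emph{not} count cylinders; instead it uses the same Bernoulli measure $\mu_p$ together with Falconer's density criterion (Proposition~4.9 in \cite{Falconer}): one checks that for every $x\in\bar R_p$ and every $s>h_N(p)$, $\limsup_n \mu_p(I_n(x))/|I_n(x)|^s=\infty$, and then concludes $\dim_H\bar R_p\le h_N(p)$. Because this criterion only needs a $\limsup$, it matches the $\liminf$ defining $l(x)$ in both directions, and the argument for $\bar S_p$ really is line-for-line the same with a sign flip in the linear function $q\mapsto q\log(p/N)+(1-q)\log\big((1-p)/(N+1)\big)$. That is why the paper can dismiss \eqref{eq:small-p-dimension} as ``analogous'' without further comment. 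Your combinatorial covering route is more elementary and self-contained, but as you correctly identify, it forces a genuine case split: $\bar R_p$ is an ``eventually always'' set and can be covered at a single scale, while $\bar S_p$ is an ``infinitely often'' set and needs the geometric summation over all scales $n\ge M$. Both approaches yield the result; the paper's buys uniformity, yours buys transparency about where the asymmetry between $R$ and $S$ actually lies.
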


\begin{proof}
We prove \eqref{eq:large-p-dimension}; the proof of \eqref{eq:small-p-dimension} is analogous. Since $\bar{R}_p\supseteq R_p\supseteq \bar{R}_{p-\eps}$ for all $p>\eps>0$ and $h_N$ is continuous in $p$, it suffices to compute $\dim_H \bar{R}_p$.
First define, for nonnegative real numbers $p_0,\dots,p_{2N}$ with $p_0+\dots+p_{2N}=1$, the set
\begin{equation*}
\FF(p_0,\dots,p_{2N}):=\left\{x\in(0,1): \lim_{n\to\infty}\frac{\#\{j\leq n:\xi_j(x)=i\}}{n}=p_i, \quad i=0,1,\dots,2N\right\}.
\end{equation*}
It is well known (see, for instance, \cite[Proposition 10.1]{Falconer}) that
\begin{equation}
\dim_H \FF(p_0,\dots,p_{2N})=-\frac{1}{\log(2N+1)}{\sum_{i=0}^{2N}p_i\log p_i}.
\label{eq:entropy-dimension}
\end{equation}
If $0<p\leq N/(2N+1)$, then $\bar{R}_p$ has Lebesgue measure one by \eqref{eq:normal-frequency-of-odd}. Suppose $N/(2N+1)<p<1$. Then $\bar{R}_p$ contains the set
\begin{equation*}
\FF\left(\frac{1-p}{N+1},\frac{p}{N},\frac{1-p}{N+1},\dots,\frac{p}{N},\frac{1-p}{N+1}\right),
\end{equation*}
which has Hausdorff dimension $h_N(p)$ by \eqref{eq:entropy-dimension}. Therefore, $\dim_H \bar{R}_p\geq h_N(p)$.

For the reverse inequality, we introduce a probability measure $\mu$ on $(0,1)$ as follows. Set
\begin{equation*}
\mu(I_n(x)):=\left(\frac{p}{N}\right)^{i(n;x)}\left(\frac{1-p}{N+1}\right)^{n-i(n;x)}.
\end{equation*}
This defines $\mu(I_{n,j})$ for all $n\in\NN\cup\{0\}$ and $j=0,1,\dots,(2N+1)^n-1$ in such a way that $\mu(I_{n,j})=\sum_{\nu=0}^{2N}\mu(I_{n+1,(2N+1)j+\nu})$ for all $n$ and $j$, and hence $\mu$ extends uniquely to a Borel probability measure on $(0,1)$, which we again denote by $\mu$. It is a routine exercise that $\mu$ concentrates its mass on the set $\{x:\lim_{n\to\infty}i(n;x)/n=p\}$, so in particular $\mu(\bar{R}_p)=1$. It now follows just as in the proof of \cite[Lemma 4.2]{Allaart} that if $s>h_N(p)$, then
\begin{equation*}
\limsup_{n\to\infty} \frac{\mu(I_n(x))}{|I_n(x)|^s}=\infty,
\end{equation*}
where $|I_n(x)|$ denotes the length of $I_n(x)$. Using \cite[Proposition 4.9]{Falconer}, we conclude that $\dim_H \bar{R}_p\leq h_N(p)$.
\end{proof}

\begin{proof}[Proof of Theorem \ref{thm:Hdim}]
Statements (i) and (ii) follow immediately from Lemma \ref{lem:dimensions} and \eqref{eq:frequency-sandwich}. The proof of (iii) is a straightforward calculus exercise.
\end{proof}

\section{Proof of Theorem \ref{thm:infinite-derivatives}} \label{sec:infinite-derivative-proof}

To avoid notational clutter we again write $F:=F_{N,a}$. In order for $F$ to have an infinite derivative at $x$, it is clear that $f_n'(x)$ must tend to $\pm\infty$. By \eqref{eq:slopes}, this is the case if and only if $\xi_n$ is even for all but finitely many $n$. However, it turns out that this condition is not sufficient.

We begin with an infinite-series representation of $F$ (see \cite{Kobayashi} for a proof when $N=1$):
\begin{equation*}
F(x)=\sum_{n=1}^\infty a^{n-1-i(n-1)}(-b)^{i(n-1)}y_{\xi_n},
\end{equation*}
where $y_0,y_1,\dots,y_{2N}$ are the numbers used in the introduction to define $f_n$. In the special case when $\xi_n$ is even for every $n$, this reduces to 
\begin{equation}
F(x)=\frac{1}{N}\sum_{n=1}^\infty a^{n-1}(1-a)\omega_n,
\label{eq:simpler-series-expression}
\end{equation}
where $\omega_n:=\xi_n/2$, and we have used that $i(a-b)=i(1-a)/N$ for $i=0,1,\dots,N$.
Let $F_+'$ and $F_-'$ denote the right-hand and left-hand derivative of $F$, respectively.

\begin{lemma} \label{lem:basic-right-derivative}
Assume $\xi_n(x)$ is even for every $x$, and let $\omega_n:=\xi_n(x)/2$. Then $F_+'(x)=\infty$ if and only if 
\begin{equation}
\lim_{n\to\infty}\big((2N+1)a\big)^n\left(1-\sum_{k=1}^\infty a^k\omega_{n+k}\right)=\infty.
\label{eq:right-derivative-condition}
\end{equation}
\end{lemma}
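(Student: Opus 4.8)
The plan is to exploit the self-affine structure to reduce the right-hand difference quotients to a single, scale-independent form, and then to locate the worst-case increments explicitly. Write $\beta=1/a$ and $P_n:=\sum_{k=1}^\infty a^k\omega_{n+k}=\Pi_\beta(\sigma^n\omega)$, so that \eqref{eq:right-derivative-condition} reads $\big((2N+1)a\big)^n(1-P_n)\to\infty$. For $x+h\in I_n(x)$ the points $x$ and $x+h$ share their first $n$ base $2N+1$ digits; letting $r_n:=(2N+1)^nx-\lfloor(2N+1)^nx\rfloor$ and $u:=r_n+(2N+1)^nh$ be the local coordinates, the recursive construction and the all-even hypothesis (so that $i(n)=0$ by \eqref{eq:slopes} and every sub-copy is an upright, positively scaled copy of $F$ with slope factor $\big((2N+1)a\big)^n$) give, using \eqref{eq:fixed-forever},
\begin{equation*}
\frac{F(x+h)-F(x)}{h}=\big((2N+1)a\big)^n\,\frac{F(u)-F(r_n)}{u-r_n}.
\end{equation*}
By \eqref{eq:simpler-series-expression}, $F(r_n)=\tfrac{1-a}{Na}P_n$, and $\tfrac{1-a}{Na}<1$ precisely because $a>1/(N+1)$; this identity is what links the geometry to the $\beta$-expansion tail. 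Since $F_+'(x)=\infty$ is equivalent to $\liminf_{h\downarrow0}$ of the quotient being $+\infty$, the problem reduces to bounding the chord slopes $\tfrac{F(u)-F(r_n)}{u-r_n}$ from below, uniformly over scales.

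Next I would identify the smallest chord slope at each scale. Parametrizing small $h$ by the level $n$ at which $x+h$ first departs from $x$, the admissible $u$ range over the part of $[0,1]$ lying strictly to the right of the first-level piece containing $r_n$. Because every even-indexed piece is ascending, $r_n$ sits in an ascending sub-copy, and the minimal chord slope is attained by sending $x+h$ down to the lowest valley of $F$ to the right of $r_n$, namely the junction $u^\star=\tfrac{\xi_{n+1}+2}{2N+1}$ at height $y_{\xi_{n+1}+2}=(\omega_{n+1}+1)\tfrac{1-a}{N}$. In the representative case $\omega_{n+1}=0$ one has $r_n\in[0,\tfrac1{2N+1})$, $F(r_n)=\tfrac{1-a}{N}P_{n+1}$ (using $P_n=aP_{n+1}$), and $F(u^\star)=\tfrac{1-a}{N}$, so the chord slope equals $\tfrac{(1-a)/N}{u^\star-r_n}\,(1-P_{n+1})$, a bounded positive multiple of $1-P_{n+1}$. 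Thus the binding constraints, taken over all scales, reproduce exactly the sequence $\big((2N+1)a\big)^n(1-P_n)$ appearing in \eqref{eq:right-derivative-condition}.

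With the worst case pinned down, both implications follow. For sufficiency, the hypothesis forces $P_n<1$ for all large $n$, hence $F(r_n)<\tfrac{1-a}{Na}$ is bounded away from $1$; the minimal chord slopes are then eventually positive and, multiplied by $\big((2N+1)a\big)^n$, tend to $+\infty$, while the remaining (``safe'') increments give quotients at least $\big((2N+1)a\big)^n$ times a positive constant, so $\liminf_{h\downarrow0}$ of the quotient is $+\infty$. For necessity, if \eqref{eq:right-derivative-condition} fails there is a subsequence along which $\big((2N+1)a\big)^n(1-P_n)$ stays bounded above; the worst-case increments identified above then yield a sequence of difference quotients that is bounded (or non-positive), so $F_+'(x)\neq\infty$.

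The main obstacle is the geometric step in the second paragraph: proving that the lowest valley to the right of $r_n$ really is the worst-case increment and that its normalized chord slope matches $1-P_n$ up to factors bounded away from $0$ and $\infty$, uniformly in $n$. This requires (i) ruling out deeper dips inside the ascending sub-copy containing $r_n$, where one uses that $F$ stays above the base level of that copy by \eqref{eq:stay-in-box}; (ii) controlling the denominator $u-r_n$, which can be small when $r_n$ lies near the right end of its first-level piece; and (iii) a careful treatment of the borderline regime $P_n\approx 1$, where the sign of the chord slope changes, together with the degenerate case $\omega_{n+1}=N$ in which no valley lies to the right. Once this comparison is established uniformly, the equivalence with \eqref{eq:right-derivative-condition} is immediate, and the companion statement for $F_-'$ and \eqref{eq:for-left-derivative} will follow by the symmetric argument with $\omega$ replaced by $\bar\omega$.
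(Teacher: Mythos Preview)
Your plan is correct and shares the paper's core idea: the binding right-hand increment at each scale is the chord to the nearest valley, and via \eqref{eq:simpler-series-expression} the height of that valley above $F(x)$ becomes exactly $\tfrac{1-a}{N}(1-P_n)$ up to a shift of index. The only real difference is organizational, and it is what creates the ``obstacle'' you flag. You parametrize $h$ by the level at which $x+h$ first leaves $I_{n+1}(x)$; this forces you to control the chord slope for all $u\in[(\xi_{n+1}+1)/(2N+1),1)$, hence to compare several valleys and to treat the borderline and $\omega_{n+1}=N$ cases separately. The paper instead chooses $n$ by the \emph{size} of $h$, taking $(2N+1)^{-n-1}<h\le(2N+1)^{-n}$, which confines $x+h$ to $I_{n,j_n}\cup I_{n,j_n+1}$. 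Sufficiency then collapses to two cases: if $x+h\ge x_{n,j_n+1}$, then $F(x+h)\ge F(z_n)$ on the descending piece by \eqref{eq:stay-in-box} and the quotient dominates $(F(z_n)-F(x))/(z_n-x)$; if not, one drops to level $n+1$ (noting $\xi_{n+1}<2N$ automatically here) and the same estimate holds with $z_{n+1}$. Necessity is immediate since the specific sequence $h_n=z_n-x$ already reproduces \eqref{eq:right-derivative-condition}. So your route works, but the paper's choice of scale eliminates the case analysis you anticipate.
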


\begin{proof}
For each $n\in\NN$, let $j_n$ be the integer such that $x\in I_{n,j_n}$, and put $z_n:=x_{n,j_n+2}$ (the right endpoint of $I_{n,j_n+1}$). In order that $F_+'(x)=\infty$, it is clearly necessary that
\begin{equation}
\lim_{n\to\infty}\frac{F(z_n)-F(x)}{z_n-x}=\infty.
\label{eq:special-difference-quotient}
\end{equation}
The slope of $f_n$ on $I_{n,j_n}$ is $\big((2N+1)a\big)^n$, and by \eqref{eq:consecutive-slope-ratios}, the slope of $f_n$ on $I_{n,j_n+1}$ is $-(2N+1)^n a^{n-1}b$, independent of $\xi_n$. Therefore, the difference $F(z_n)-F(x)$ does not depend on $\xi_n$, and we may assume $\xi_n=0$. Then $z_n=0.\xi_1\xi_2\cdots \xi_{n-1}200\cdots$, and \eqref{eq:simpler-series-expression} applied to $x$ and $z_n$ (noting that $\omega_n(x)=0$ and $\omega_n(z_n)=1$) gives
\begin{align*}
F(z_n)-F(x)&=a^{n-1}\frac{1-a}{N}-\sum_{k=n+1}^\infty a^{k-1}\frac{(1-a)\omega_k}{N}\\
&=\frac{a^{n-1}(1-a)}{N}\left(1-\sum_{k=1}^\infty a^k\omega_{n+k}\right).
\end{align*}
Since $1/(2N+1)^n<z_n-x\leq 2/(2N+1)^n$, it follows that \eqref{eq:special-difference-quotient} is equivalent to \eqref{eq:right-derivative-condition}, showing that \eqref{eq:right-derivative-condition} is necessary. We now demonstrate that it is also sufficient.

Assume \eqref{eq:right-derivative-condition}, or equivalently, \eqref{eq:special-difference-quotient}. Then $F(z_n)>F(x)$ for all large enough $n$. Given $h>0$, let $n\in\NN$ such that $(2N+1)^{-n-1}<h\leq (2N+1)^{-n}$, let $j:=j_n$, and as before, $z_n:=x_{n,j+2}$. If $x+h\geq x_{n,j+1}$, then, since $f_n'<0$ on $(x_{n,j+1},x_{n,j+2})$, we have by \eqref{eq:stay-in-box},
\begin{equation*}
\frac{F(x+h)-F(x)}{h}\geq \frac{F(z_n)-F(x)}{h}\geq \frac{F(z_n)-F(x)}{z_n-x} \to\infty,
\end{equation*}
where the last inequality holds for all sufficiently large $n$.

Assume now that $x+h<x_{n,j+1}$. Then $\xi_{n+1}=2i$ for some $i<N$, so $x\in I_{n+1,(2N+1)j+2i}$ and $z_{n+1}=x_{n+1,(2N+1)j+2i+2}<x_{n,j+1}$. Moreover, $x_{n+1,(2N+1)j+2i+1}<x+h<z_{n,j+1}$. In view of \eqref{eq:stay-in-box} and the zig-zag pattern in the graph of $f_{n+1}$, it follows that $F(x+h)\geq F(z_{n+1})$. Finally, $h\leq (2N+1)(z_{n+1}-x)$. Combining these facts, we obtain, for all sufficiently large $n$,
\begin{equation*} 
\frac{F(x+h)-F(x)}{h}\geq \frac{F(z_{n+1})-F(x)}{h}\geq \frac{F(z_{n+1})-F(x)}{(2N+1)(z_{n+1}-x)}.
\end{equation*}
Thus, by \eqref{eq:special-difference-quotient}, $F_+'(x)=\infty$.
\end{proof}

\begin{proof}[Proof of Theorem \ref{thm:infinite-derivatives}]
If $x=j/(2N+1)^n$ for some $n\in\NN$ and $j\in\ZZ$, then it follows immediately from \eqref{eq:consecutive-slope-ratios} that $F_+'(x)$ and $F_-'(x)$ are of opposite signs (in fact, one $+\infty$, the other $-\infty$), so $F$ does not have an infinite derivative at $x$. And since $\omega_n=\xi_n=0$ for all but finitely many $n$ in this case, 
\begin{equation*}
\sum_{j=1}^\infty a^j\overline{\omega_{n+j}}=\sum_{j=1}^\infty a^j N=\frac{aN}{1-a}>1
\end{equation*}
for all sufficiently large $n$, so \eqref{eq:for-left-derivative} fails.

Now assume that $x$ is {\em not} of the form $j/(2N+1)^n$, and let $m:=M(x)$. As already observed earlier, $F$ does not have an infinite derivative at $x$ if $m=\infty$, so assume $m<\infty$. Since $F(1-x)=1-F(x)$, it follows that $F_-'(x)=F_+'(1-x)$ when at least one of these two quantities exists, and moreover, $\xi_n(1-x)=2N-\xi_n(x)$ and so $\omega_n(1-x)=\overline{\omega_n(x)}$ when $\xi_n(x)$ is even. Therefore, it suffices to show that $F_+'(x)=\pm\infty$ if and only if \eqref{eq:for-right-derivative} holds. If $m=0$, this is immediate from Lemma \ref{lem:basic-right-derivative}, so assume $m>0$. Choose $n_0$ so that $\xi_n$ is even for all $n\geq n_0$, let $j$ be the integer such that $x\in I_{n_0,j}=[x_{n_0,j},x_{n_0,j+1})$. Then $x=x_{n_0,j}+(2N+1)^{-n_0}\hat{x}$, where $\hat{x}\in[0,1)$ satisfies the hypothesis of Lemma \ref{lem:basic-right-derivative}, and $M(x_{n_0,j})=M(x)=m$. Note that \eqref{eq:right-derivative-condition} holds for $\hat{x}$ if and only if it holds for $x$, since the condition is invariant under a shift of the sequence $(\xi_n)$. The graph of $F$ above $I_{n_0,j}$ is an affine copy of the full graph of $F$, scaled horizontally by $(2N+1)^{-n_0}$ and vertically by $a^{n_0-m}b^m$, and reflected top-to-bottom if $m$ is odd. Thus, $F_+'(x)$ is infinite if and only if $F_+'(\hat{x})$ is, with the same sign when $m$ is even, and the opposite sign when $m$ is odd. 
\end{proof}

\section{Proofs of Theorem \ref{thm:close-correspondence} and Corollary \ref{thm:size-of-Dinf}} \label{sec:using-beta-expansions}

\begin{proof}[Proof of Theorem \ref{thm:close-correspondence}]
We will need the auxiliary sets
\begin{equation}
\widetilde{\UU}_\beta:=\{\omega\in\UU_\beta: \limsup_{n\to\infty}\Pi_\beta(\sigma^n(\omega))<1\ \mbox{and}\ \limsup_{n\to\infty}\Pi_\beta(\sigma^n(\bar{\omega}))<1\},
\label{eq:alternative-representation-of-U-tilde}
\end{equation}
for $1<\beta<N+1$, as well as the family of affine maps
\begin{equation*}
\psi_{n,k}(x):=(2N+1)^{-n}(x+k), \qquad n\in\NN, \quad k=0,1,\dots,(2N+1)^n-1,
\end{equation*}
and the function $\Phi:\Omega\to[0,1]$ given by
\begin{equation}
\Phi(\omega):=2\Pi_{2N+1}(\omega), \qquad\omega\in\Omega.
\label{eq:Phi-definition}
\end{equation}
Since $a>a_{\min}$ implies that $\big((2N+1)a\big)^n\to\infty$, it follows from Theorem \ref{thm:infinite-derivatives} that
\begin{equation}
\bigcup_{n,k}\psi_{n,k}\big(\Phi\big(\widetilde{\UU}_{1/a}\big)\big)\subset \DD_\infty(a)\subset \bigcup_{n,k}\psi_{n,k}(\Phi(\UU_{1/a})),
\label{eq:key-sandwich}
\end{equation}
where the unions are over $n\in\NN$ and $k=0,1,\dots,(2N+1)^n-1$. Note that the set on the far right of \eqref{eq:key-sandwich} is precisely the set on the right hand side of \eqref{eq:set-correspondence}. It was shown in \cite{Allaart2} that $\widetilde{\UU}_\beta=\UU_\beta$ for all $1<\beta<\beta_c(N)$ and almost all $\beta_c(N)<\beta<N+1$, so \eqref{eq:key-sandwich} yields (i) and the first part of (ii). It was further shown in \cite{Allaart2} that there are infinitely many values of $\beta$, including $\beta_c(N)$, for which $\widetilde{\UU}_\beta$ is a proper subset of $\UU_\beta$, and that for each such $\beta$ and any given sequence $(\theta_n)$ of positive numbers, there are in fact uncountably many $\omega\in\UU_\beta\backslash \widetilde{\UU}_\beta$ such that
\begin{equation*}
\liminf_{n\to\infty} \theta_n \big(1-\Pi_\beta(\sigma^n(\omega))\big)<\infty.
\end{equation*}
Taking $\theta_n=\big((2N+1)a\big)^n$ we obtain the second part of statement (ii). 

To prove (iii), we consider Hausdorff dimension in the sequence space $\Omega$. For each $\beta>1$, define a metric $\rho_\beta$ on $\Omega$ by $\rho_\beta(\omega,\eta):=\beta^{-\inf\{i:\omega_i\neq\eta_i\}}$ for $\omega=\omega_1\omega_2\cdots$ and $\eta=\eta_1\eta_2\cdots$. To avoid confusion, we reserve the notation $\dim_H$ for Hausdorff dimension in $\RR$ and write $\Dim^{(\beta)}_H$ for Hausdorff dimension in $\Omega$ induced by the metric $\rho_\beta$. Since for any two numbers $\beta_1,\beta_2\in(1,\infty)$ we have
\begin{equation*}
\rho_{\beta_2}(\omega,\eta)=\left(\rho_{\beta_1}(\omega,\eta)\right)^{\log \beta_2/\log \beta_1},
\end{equation*}
it follows in a straightforward manner that
\begin{equation}
\Dim_H^{(\beta_1)}E=\frac{\log\beta_2}{\log\beta_1}\Dim_H^{(\beta_2)}E, \qquad E\subset\Omega.
\label{eq:Hausdorff-dimension-equivalence}
\end{equation}
We now make two important observations:
\begin{enumerate}[(1)]
\item The restriction of $\Pi_\beta$ to $\UU_\beta$ is bi-Lipschitz with respect to the metric $\rho_\beta$ (see \cite[Lemma 2.7]{JSS} or \cite[Lemma 2.2]{Allaart2});
\item The map $\Pi_{2N+1}$ is bi-Lipschitz on all of $\Omega$ with respect to $\rho_{2N+1}$. (This follows because $\Pi_{2N+1}$ maps the Cantor space $\Omega$ onto a geometric Cantor set in $[0,1]$.)
\end{enumerate}
Since bi-Lipschitz maps preserve Hausdorff dimension, these observations and \eqref{eq:Hausdorff-dimension-equivalence} imply that for any $\beta\in(1,N+1)$,
\begin{align*}
\dim_H \Pi_{2N+1}(\UU_\beta)&=\Dim_H^{(2N+1)}\UU_\beta\\
&=\frac{\log\beta}{\log(2N+1)}\Dim_H^{(\beta)}\UU_\beta\\
&=\frac{\log\beta}{\log(2N+1)}\dim_H \A_\beta.
\end{align*}
Now it was shown in \cite{Allaart2} that $\Dim_H^{(\beta)}\UU_\beta=\Dim_H^{(\beta)}\widetilde{\UU_\beta}$ for all $1<\beta<N+1$. Thus, taking $\beta=1/a$ and using \eqref{eq:Phi-definition}, \eqref{eq:key-sandwich} and the countable stability of Hausdorff dimension, we obtain \eqref{eq:dimension-correspondence}.
\end{proof}

\begin{proof}[Proof of Corollary \ref{thm:size-of-Dinf}]
From Theorems \ref{thm:folklore} and \ref{thm:close-correspondence} it follows immediately that $\DD_\infty(a)$ is empty when $a\geq a_\infty^*$, nonempty but countable when $\hat{a}_\infty<a<a_\infty^*$, of Hausdorff dimension zero when $a=\hat{a}_\infty$, and of positive Hausdorff dimension when $a_{\min}<a<\hat{a}_\infty$. The ``moreover" statement of the theorem is a consequence of Theorem \ref{thm:close-correspondence}(iii) and \cite[Theorem 2.6]{KongLi}. It remains to show that $\DD_\infty(a)$ contains only rational points when $\hat{a}_\infty<a<a_\infty^*$, and is uncountable when $a=\hat{a}_\infty$. The former follows from Theorem \ref{thm:close-correspondence}(ii) since, as pointed out in \cite{GlenSid,KLD}, $\UU_\beta$ contains only eventually periodic sequences when $\beta<\beta_c$; the latter is a direct consequence of Theorem \ref{thm:infinite-derivatives} and \cite[Theorem 1.3]{Allaart2}.
\end{proof}

\begin{proof}[Proof of Proposition \ref{prop:thresholds}]
(i) For the first two inequalities, let
\begin{equation*}
g_N(x):=N^{-N}(2N+1)^{2N+1}x^{N+1}\big((N+1)x-1\big)^N,
\end{equation*}
and observe that $g_N$ is strictly increasing for $x\geq 1/(N+1)$, with $g_N(\tilde{a}_0(N))=1$. Since $g_N(a_{\min}(N))=0$, this gives the first inequality. Now for a constant $c>1$, we can write
\begin{equation}
g_N(c/N)=2c\big(4c(c-1)\big)^N\left(1+\frac{1}{2N}\right)^{2N+1}\left(1+\frac{c}{(c-1)N}\right)^N.
\label{eq:g_N}
\end{equation}
Let $c_0:=(1+\sqrt{2})/2$. Then $4c_0(c_0-1)=1$, so \eqref{eq:g_N} shows that $g_N(c_0/N)\geq 2c_0>1$, and hence, $\tilde{a}_0(N)<c_0/N$. Straightforward algebra shows that $c_0/N<a_0^*(N)$ for every $N\geq 5$, establishing the second inequality. (Of course, by direct calculation, $\tilde{a}_0(N)<a_0^*(N)$ also for $N<5$.)

For the third inequality, observe that $\tau_i^{(2m)}\leq m+1$, so the definition of $\beta_c(N)$ implies, by summing a geometric series, that $\beta_c(2m)\leq m+2$. Routine algebra gives
\begin{equation*}
a_0^*(2m)=\frac{6m+1}{(2m+1)(4m+1)}<\frac{1}{m+2}\leq\hat{a}_\infty(2m),
\end{equation*}
for all $m\geq 4$. In addition, a direct calculation shows that $a_0^*(6)<\hat{a}_\infty(6)$; see Table \ref{tab:thresholds}. Similarly, $\tau_i^{(2m-1)}\leq m$, so that $\beta_c(2m-1)\leq m+1$, and again by routine algebra,
\begin{equation*}
a_0^*(2m-1)=\frac{3m-1}{m(4m-1)}<\frac{1}{m+1}\leq \hat{a}_\infty(2m-1),
\end{equation*}
for all $m\geq 3$. Thus, the third inequality holds for all $N\geq 5$. Finally, the last inequality follows since $G(N)<\beta_c(N)$; see Baker \cite{Baker}.

(ii) The limits involving $a_{\min}$, $a_0^*$ and $a_\infty^*$ are obvious, and the limit involving $\hat{a}_\infty$ was established by Baker \cite{Baker}, who gives a finer analysis of the asymptotics of the threshold $\beta_c(N)$. The second limit follows since, by \eqref{eq:g_N} and the aforementioned fact that $4c_0(c_0-1)=1$, $g_N(c/N)\to 0$ for $1<c<c_0$, and $g_N(c/N)\to \infty$ for $c>c_0$.
\end{proof}

\footnotesize

\end{document}